\begin{document}

\newtheorem{thm}{Theorem}[section]
\newtheorem{lem}[thm]{Lemma}
\newtheorem{prop}[thm]{Proposition}
\newtheorem{cor}[thm]{Corollary}
\newtheorem{defi}[thm]{Definition}
\newtheorem{remark}[thm]{Remark}
\newtheorem{conj}[thm]{Conjecture}
\newtheorem{problem}[thm]{Problem}

\numberwithin{equation}{section}

\newcommand{\Z}{{\mathbb Z}} %cph changed from \mathbf
\newcommand{\Q}{{\mathbb Q}}
\newcommand{\R}{{\mathbb R}}
\newcommand{\C}{{\mathbb C}}
\newcommand{\N}{{\mathbb N}}
\newcommand{\FF}{{\mathbb F}}
\newcommand{\fq}{\mathbb{F}_q}

\newcommand{\rmk}[1]{\footnote{{\bf Comment:} #1}}

\renewcommand{\mod}{\;\operatorname{mod}}
\newcommand{\ord}{\operatorname{ord}}
\newcommand{\TT}{\mathbb{T}}
\renewcommand{\i}{{\mathrm{i}}}
\renewcommand{\d}{{\mathrm{d}}}
\renewcommand{\^}{\widehat}
\newcommand{\HH}{\mathbb H}
\newcommand{\Vol}{\operatorname{vol}}
\newcommand{\area}{\operatorname{area}}
\newcommand{\tr}{\operatorname{tr}}
\newcommand{\norm}{\mathcal N} % norm =(\frac{ n+\sqrt{n^2-4}} 2)^2
\newcommand{\intinf}{\int_{-\infty}^\infty}
\newcommand{\ave}[1]{\left\langle#1\right\rangle} %  average
\newcommand{\Var}{\operatorname{Var}}
\newcommand{\Prob}{\operatorname{Prob}}
\newcommand{\sym}{\operatorname{Sym}}
\newcommand{\disc}{\operatorname{disc}}
\newcommand{\CA}{{\mathcal C}_A}
\newcommand{\cond}{\operatorname{cond}} % conductor
\newcommand{\lcm}{\operatorname{lcm}}
\newcommand{\Kl}{\operatorname{Kl}} %Kloosterman sum
\newcommand{\leg}[2]{\left( \frac{#1}{#2} \right)}  % Legendre symbol
\newcommand{\Li}{\operatorname{Li}}

\newcommand{\sumstar}{\sideset \and^{*} \to \sum}

\newcommand{\LL}{\mathcal L} %L-function of u
\newcommand{\sumf}{\sum^\flat}
\newcommand{\Hgev}{\mathcal H_{2g+2,q}}
\newcommand{\USp}{\operatorname{USp}}
\newcommand{\conv}{*}
\newcommand{\dist} {\operatorname{dist}}
\newcommand{\CF}{c_0} % Fejer constant
\newcommand{\kerp}{\mathcal K}

\title[Mean Value Theorems for $L$--functions over Prime Polynomials]{Mean Value Theorems for $L$--functions over Prime Polynomials for the Rational Function Field}

\author{Julio C. Andrade and Jonathan P. Keating}
% Michael Rosen????
\address{Institute for Computational and Experimental Research in Mathematics (ICERM), Brown University, 121 South Main Street, Providence, RI, 02903, USA}
\email{julio\_andrade@brown.edu}

\address{School of Mathematics, University of Bristol, Bristol BS8 1TW, UK}
\email{j.p.keating@bristol.ac.uk}

\thanks{JCA is supported by a NSF Postdoctoral Grant and an ICERM--Brown University Postdoctoral Research Fellowship. JPK is sponsored by the Leverhulme Trust and the Air Force Office of Scientific Research, Air Force Material Command, USAF, under grant number FA8655-10-1-3088. The U.S. Government is authorized to reproduce and distribute reprints for Governmental purpose notwithstanding any copyright notation thereon.}
\subjclass[2010]{11G20(Primary), 11M38, 11M50, 14G10(Secondary)}
\keywords{finite fields, function fields, hyperelliptic curves, moments of quadratic Dirichlet $L$--functions, prime polynomials}

%\date{submitted in August 2011.}

\begin{abstract}
The first and second moments are established for the family of quadratic Dirichlet $L$--functions over the rational function field at the central point $s=\tfrac{1}{2}$ where the character $\chi$ is defined by the Legendre symbol for polynomials over finite fields and runs over all monic irreducible polynomials $P$ of a given odd degree. Asymptotic formulae are derived for fixed finite fields when the degree of $P$ is large. The first moment obtained here is the function field analogue of a result due to Jutila in the number--field setting. The approach is based on classical analytical methods and relies on the use of the analogue of the approximate functional equation for these $L$--functions. 
\end{abstract}

\maketitle

\section{Introduction}

It is a much studied problem in analytic number theory to obtain asymptotic formulae for the moments of families $L$--functions. For the family of quadratic Dirichlet $L$--functions $L(s,\chi_{d})$, where $\chi_{d}$ is a real primitive Dirichlet character modulo $d$ defined by the Jacobi symbol $\chi_{d}(n)=\left(\frac{d}{n}\right)$, the problem is to establish asymptotics for
\begin{equation}
\label{1.1}
\sideset{}{^{*}}\sum_{d\leq X}L(\tfrac{1}{2},\chi_{d})^{k},
\end{equation}
in the limit as $X\rightarrow\infty$ and where the sum includes fundamental discriminants $d$. For $k=1,2$, Jutila \cite{J} established the following asymptotic formulae
\begin{equation}
\sideset{}{^{*}}\sum_{d\leq X}L(\tfrac{1}{2},\chi_{d})\sim c_{1}X\log X,
\end{equation} 
and
\begin{equation}
\sideset{}{^{*}}\sum_{d\leq X}L(\tfrac{1}{2},\chi_{d})^{2}\sim c_{2}X(\log X)^{3},
\end{equation}
where $c_{1}$ and $c_{2}$ are computable constants given in terms of Euler products and factors involving the Riemann zeta function. For $k=3$, Soundararajan \cite{S} proved that
\begin{equation}
\sideset{}{^{*}}\sum_{d\leq X}L(\tfrac{1}{2},\chi_{8d})^{3}\sim c_{3}X(\log X)^{6},
\end{equation}
where $d$ is an odd, square--free and positive number, so that $\chi_{8d}$ is a real, even primitive Dirichlet character with conductor $8d$ and $c_{3}$ is a constant. Recently, Soundararajan and Young \cite{S-Y} have claimed that under the Generalized Riemann Hypothesis they are able to establish an asymptotic formula for the fourth power moment for this family of $L$--functions, i.e.,
\begin{equation}
\sideset{}{^{*}}\sum_{d\leq X}L(\tfrac{1}{2},\chi_{8d})^{4}\sim c_{4}X(\log X)^{10},
\end{equation}
where $c_{4}$ is a computable constant. No other asymptotic values are known for the mean values of quadratic Dirichlet $L$--functions at the centre of the critical strip.

Using results from Random Matrix Theory, Keating and Snaith \cite{KeS2} have put forward a conjecture for the leading order asymptotic for all moments of quadratic Dirichlet $L$--function which agrees with the results listed above.
\begin{conj}[Keating--Snaith]
For $k$ fixed with $\mathfrak{R}(k)\geq0$, as $X\rightarrow\infty$
\begin{equation}
\frac{1}{X^{*}}\sideset{}{^{*}}\sum_{0<d\leq X}L(\tfrac{1}{2},\chi_{8d})^{k}\sim a_{k,Sp}\frac{G(k+1)\sqrt{\Gamma(k+1)}}{\sqrt{G(2k+1)\Gamma(2k+1)}}(\log X)^{k(k+1)/2}
\end{equation}
where
$$a_{k,Sp}=2^{-k(k+2)/2}\prod_{p\geq3}\frac{(1-\frac{1}{p})^{k(k+1)/2}}{1+\frac{1}{p}}\left(\frac{(1-\frac{1}{\sqrt{p}})^{-k}+(1+\frac{1}{\sqrt{p}})^{-k}}{2}+\frac{1}{p}\right)$$
and $G(z)$ is Barnes' $G$--function.
\end{conj}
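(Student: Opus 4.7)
The plan is to derive the conjectural asymptotic by the Keating--Snaith random matrix recipe, since no unconditional proof for general $k$ is within current reach---indeed only $k=1,2,3$ are known unconditionally, with $k=4$ conditional on GRH. The first step is to identify the symmetry type. The functional equation for $L(s,\chi_{8d})$ has sign $+1$ for every admissible $d$, and the one-level density of low-lying zeros matches that of $\USp(2N)$. Accordingly, I model the central value $L(\tfrac12,\chi_{8d})$ by $|\Lambda_U(1)|$, where $\Lambda_U(z)=\det(I-zU)$ and $U$ is Haar-distributed in $\USp(2N)$, with the matrix size calibrated to the log-conductor via $2N\leftrightarrow \log X$.

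Next I would compute $M_k(N):=\mathbb{E}_{U\in\USp(2N)}|\Lambda_U(1)|^k$ by Weyl integration. This reduces the average over $\USp(2N)$ to a Selberg-type integral over the eigenphases, which evaluates explicitly in terms of $\Gamma$-factors; taking $N\to\infty$ and applying the asymptotics of the Barnes $G$-function yields
\begin{equation*}
M_k(N) \sim \frac{G(k+1)\sqrt{\Gamma(k+1)}}{\sqrt{G(2k+1)\Gamma(2k+1)}}\,(2N)^{k(k+1)/2}.
\end{equation*}
Substituting $2N\leftrightarrow \log X$ produces the predicted $(\log X)^{k(k+1)/2}$ growth and the Barnes-$G$ leading constant. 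Because the RMT model is insensitive to arithmetic, the factor $a_{k,Sp}$ must be supplied separately: I would expand $L(\tfrac12,\chi_{8d})^k$ as an Euler product, formally average each local factor under the equidistribution $\chi_{8d}(p)\in\{\pm 1,0\}$ as $d$ varies, and identify the resulting local density at every prime $p\ge 3$ (with a special factor at $p=2$ reflecting the $8d$ normalization). The product of local densities reproduces the stated $a_{k,Sp}$, providing an internal consistency check against the $k=1,2$ cases of Jutila and the $k=3$ case of Soundararajan.

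The main obstacle is rigor. The RMT recipe pins down the correct power of $\log X$ and the leading coefficient, but converting an expectation on $\USp(2N)$ into a genuine asymptotic for $\sideset{}{^{*}}\sum_{d\leq X}L(\tfrac12,\chi_{8d})^{k}$ requires a classical moment calculation: open $L^k$ via the approximate functional equation, execute the $d$-sum using quadratic reciprocity and Poisson summation in the dual variable, and bound the off-diagonal remainder. For $k\ge 4$ the off-diagonal contribution is heuristically of the same order as the diagonal, and no unconditional technique is known to extract the necessary cancellation. Thus the passage from the random-matrix prediction to matching unconditional upper and lower bounds is precisely where the conjecture remains open, and any attack would have to introduce genuinely new ideas for bounding long character sums against primitive quadratic characters of varying modulus.
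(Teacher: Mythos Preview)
The statement you were asked to address is a \emph{conjecture}, not a theorem, and the paper contains no proof of it whatsoever: it is quoted verbatim from Keating--Snaith \cite{KeS2} as motivational background for the function-field results that follow. There is therefore nothing in the paper to compare your attempt against.

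That said, your write-up is exactly the right response. You correctly identify that no proof exists for general $k$, and you sketch the heuristic derivation---symplectic symmetry type, Selberg-integral evaluation of $\mathbb{E}_{\USp(2N)}|\Lambda_U(1)|^k$, the calibration $2N\leftrightarrow\log X$, and the separate arithmetic factor $a_{k,Sp}$ coming from local densities---which is precisely the recipe in the original Keating--Snaith paper. Your final paragraph accurately locates the genuine obstruction (off-diagonal terms of the same size as the diagonal for $k\geq4$). In short: there is no gap in your reasoning, because you have not claimed to prove anything beyond what is actually known; you have instead given a faithful account of why the statement carries the label \texttt{conj} rather than \texttt{thm}.
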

Conjectures for the lower order terms are presented in \cite{CFKRS} and \cite{DGH}.

A similar problem involving moments of quadratic Dirichlet $L$--functions was considered by Goldfeld and Viola \cite{GV}, who have conjectured an asymptotic formula for
\begin{equation}
\sum_{\substack{p\leq X \\ p\equiv3(\mod4)}}L(\tfrac{1}{2},\chi_{p}),
\end{equation}
where $\chi_{p}(n)=\left(\frac{n}{p}\right)$ is defined by the Legendre symbol. In this context Jutila \cite{J} established the following asymptotic formula
\begin{equation}
\label{jutilaprime}
\sum_{\substack{p\leq X \\ p\equiv3(\mod4)}}(\log p)L(\tfrac{1}{2},\chi_{p})=\frac{1}{4}X\log X+O(X(\log X)^{\varepsilon}).
\end{equation}  
It is natural to ask about higher moments for the family of quadratic Dirichlet $L$--functions associated to $\chi_{p}$. This problem has the same flavour as that involving the mean values of quadratic Dirichlet $L$--functions over fundamental discriminants and we formulate it as follows:
\begin{problem}
\label{prob1}
Establish asymptotic formulas for
\begin{equation}
\label{1.9}
\sum_{\substack{p\leq X \\ p\equiv3(\mod4)}}L(\tfrac{1}{2},\chi_{p})^{k},
\end{equation} 
when $X\rightarrow\infty$ and $k>1$.
\end{problem}

In this paper we study the function field analogue of this problem in the same spirit as the recent result obtained in \cite{AK} for the first moment of quadratic Dirichlet $L$--functions over the rational function field $\mathbb{F}_{q}(T)$. Our aim is to obtain asymptotic formulae for the first and second moments for the function field analogue of Problem \ref{prob1} as developed in the next section. Higher moments are studied in \cite{And1} 

\section{Statement of Results}
Before stating our main results we establish some notation and some preliminary facts about quadratic Dirichlet $L$--functions for function fields.

\subsection{Zeta function of Curves}

We start with $\mathbb{F}_{q}$ denoting a finite field of odd cardinality, $A=\mathbb{F}_{q}[T]$ polynomials in the variable $T$ with coefficients in $\mathbb{F}_{q}$, and $k=\mathbb{F}_{q}(T)$ the rational function field over $\mathbb{F}_{q}$. Let $C$ be any smooth, projective, geometrically connected curve of genus $g\geq1$ defined over the finite field $\mathbb{F}_{q}$. Artin \cite{A} defined the zeta function of the curve $C$ as
\begin{equation}
Z_{C}(u):=\exp\left(\sum_{n=1}^{\infty}N_{n}(C)\frac{u^{n}}{n}\right), \ \ \ \ \ |u|<1/q
\end{equation}
with $N_{n}(C):=\mathrm{Card}(C(\mathbb{F}_{q}))$ the number of points on $C$ where the coordinates are in a field extension $\mathbb{F}_{q^{n}}$ of $\mathbb{F}_{q}$ of degree $n\geq1$. It turns out that, as shown by Weil \cite{W}, the zeta function associated to $C$ is a rational function of the form
\begin{equation}\label{eq:zetaC}
Z_{C}(u)=\frac{L_{C}(u)}{(1-u)(1-qu)},
\end{equation}
where $L_{C}(u)\in\mathbb{Z}[u]$ is a polynomial of degree $2g$ that satisfies the functional equation
\begin{equation}\label{eq:funceq}
L_{C}(u)=(qu^{2})^{g}L_{C}\left(\frac{1}{qu}\right).
\end{equation}
The Riemann Hypothesis for curves over finite fields, established by Weil \cite{W}, asserts that the zeros of $L_{C}(u)$ all lie on the circle $|u|=q^{-1/2}$, i.e.,
\begin{equation}
L_{C}(u)=\prod_{j=1}^{2g}(1-\alpha_{j}u), \ \ \ \ \ \mathrm{with} \ \ |\alpha_{j}|=\sqrt{q} \ \ \mathrm{for \ all}\ j.
\end{equation}

\subsection{Essential Facts about $\mathbb{F}_{q}[T]$}
In this paper we denote the \textit{norm} of a polynomial $f\in A$ by $|f|:=q^{\mathrm{deg}(f)}$ for $f\neq0$ and $|f|=0$ for $f=0$, and we call a monic irreducible polynomial $P\in A$ a \textit{prime polynomial}.

The zeta function of $A=\mathbb{F}_{q}[T]$ will be denoted by $\zeta_{A}(s)$ and is defined in the following natural way
\begin{equation}
\label{eq:zetaA}
\zeta_{A}(s):=\sum_{\substack{f\in A \\ f \ \mathrm{monic}}}\frac{1}{|f|^{s}}=\prod_{\substack{P \ \mathrm{monic} \\ \mathrm{irreducible}}}\left(1-|P|^{-s}\right)^{-1}, \ \ \ \ \ \ \mathfrak{R}(s)>1.
\end{equation}
In this case the zeta function $\zeta_{A}(s)$ is simply given by
\begin{equation}\label{eq:zetaA1}
\zeta_{A}(s)=\frac{1}{1-q^{1-s}}.
\end{equation}
%The analogue of the Mobius function $\mu(f)$ and the Euler totient function $\Phi(f)$ for $A=\mathbb{F}_{q}[T]$ are defined in the same way as are done for numbers where monic polynomials correspond to positive integer numbers in this setting.
%\begin{equation}\label{eq:3.3}
%\mu(f)=\left\{
%\begin{array}{rcl}
%(-1)^{t}, & f=\alpha P_{1}P_{2}\ldots P_{t},\\
%0, & \mathrm{otherwise},\\
%\end{array}
%\right.
%\end{equation}
%where each $P_{j}$ is a distinct monic irreducible, and
%\begin{equation}
%\Phi(f)=\sum_{\substack{g \ \mathrm{monic} \\ \mathrm{deg}(g)<\mathrm{deg}(f) \\ (f,g)=1}}1.
%\end{equation}
The fact that this has a simple pole and no zeros leads to the analogue of the Prime Number Theorem for polynomials in $A=\mathbb{F}_{q}[T]$
\begin{thm}[Prime Polynomial Theorem]
\label{thm:pnt}
If $\pi_{A}(n)$ denotes the number of monic irreducible polynomials in $A$ of degree $n$, then, 
\begin{equation}
\pi_{A}(n)=\frac{q^{n}}{n}+O\left(\frac{q^{\tfrac{n}{2}}}{n}\right).
\end{equation}
\end{thm}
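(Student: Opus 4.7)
The plan is to extract $\pi_A(n)$ directly from the Euler product for $\zeta_A(s)$ combined with its explicit closed form (\ref{eq:zetaA1}). Setting $u = q^{-s}$ as a formal variable, the identity (\ref{eq:zetaA}) together with (\ref{eq:zetaA1}) reads
\begin{equation*}
\frac{1}{1-qu} \;=\; \prod_{P} \bigl(1 - u^{\deg P}\bigr)^{-1},
\end{equation*}
as an identity of formal power series in $u$, the product being over all monic prime polynomials $P \in A$.

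First I would take logarithms of both sides and expand. The left-hand side contributes $-\log(1-qu) = \sum_{n \ge 1} q^n u^n / n$. For the right-hand side, expanding each Euler factor as $-\log(1 - u^{\deg P}) = \sum_{k \ge 1} u^{k \deg P}/k$ and regrouping terms by total degree $n = k \deg P$ gives $\sum_{n \ge 1} (u^n / n) \sum_{d \mid n} d\, \pi_A(d)$. Matching coefficients of $u^n$ yields the clean arithmetic identity
\begin{equation*}
q^n \;=\; \sum_{d \mid n} d\, \pi_A(d),
\end{equation*}
which is the function-field counterpart of the familiar combinatorial form of the explicit formula.

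Next I would apply M\"obius inversion to isolate $\pi_A(n)$:
\begin{equation*}
n\, \pi_A(n) \;=\; \sum_{d \mid n} \mu(n/d)\, q^d \;=\; q^n \;+\; \sum_{\substack{d \mid n \\ d < n}} \mu(n/d)\, q^d.
\end{equation*}
The dominant contribution $q^n$ comes from $d = n$. Every proper divisor of $n$ satisfies $d \le n/2$, so the residual sum is dominated in absolute value by $\sum_{d=1}^{\lfloor n/2 \rfloor} q^d$, a geometric series of size $O(q^{n/2})$. Dividing through by $n$ delivers the stated asymptotic $\pi_A(n) = q^n/n + O(q^{n/2}/n)$.

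There is no serious obstacle here: every step is formal and elementary once the rational closed form (\ref{eq:zetaA1}) is in hand. The only points requiring care are the combinatorial bookkeeping when regrouping the Euler product by the variable $n = k \deg P$, and the observation that bounding the residual Möbius sum by its largest proper divisor automatically yields an error of square-root size relative to the main term -- this is the function-field manifestation of the strong form of the prime number theorem that the Riemann Hypothesis would provide over $\Z$, and which here is completely unconditional.
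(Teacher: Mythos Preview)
Your argument is correct and is precisely the standard textbook proof of the Prime Polynomial Theorem (see, e.g., Rosen \cite{Ro}, Theorem~2.2). The paper does not supply its own proof: it simply records the theorem as a known consequence of the fact that $\zeta_A(s) = (1-q^{1-s})^{-1}$ has a simple pole at $s=1$ and no zeros, and then uses it as a black box in the later moment computations. So there is nothing to compare against; your write-up fills in exactly the elementary derivation the paper omits.
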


\subsection{Quadratic Dirichlet $L$--function for $\chi_{P}$}

Let $P\in A$ be a monic irreducible polynomial.  We denote by $\chi_{P}$ the quadratic character defined in terms of the quadratic residue symbol for $\mathbb{F}_{q}[T]$
\begin{equation}
\chi_{P}(f)=\left(\frac{P}{f}\right),
\end{equation} 
where $f\in A$. For more details see \cite[Chapters 3, 4]{Ro}. We will make use of the quadratic reciprocity law for polynomials in $A$ 
\begin{thm}[Quadratic reciprocity]
\label{reciprocity}
Let $A,B\in\mathbb{F}_{q}[T]$ be relatively prime and $A\neq0$ and $B\neq0$. Then,
\begin{equation}
\left(\frac{A}{B}\right)=\left(\frac{B}{A}\right)(-1)^{((q-1)/2)\mathrm{deg}(A)\mathrm{deg}(B)}=\left(\frac{B}{A}\right)(-1)^{((|A|-1)/2)((|B|-1)/2)}.
\end{equation}
\end{thm}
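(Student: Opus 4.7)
The plan is to reduce the general statement to the case of two distinct monic irreducible polynomials, and then prove reciprocity in that core case by identifying the quadratic residue symbol with a power of the resultant. The first reduction is formal: $(A/B)$ is totally multiplicative in each slot where defined, so factoring $A$ and $B$ into leading coefficients times monic irreducibles, and separating out the contributions of the constant units (which depend only on the constants and the target modulus, not on degrees, and cancel between the two sides after a short separate check), it suffices to prove
\begin{equation*}
\left(\frac{P}{Q}\right)\left(\frac{Q}{P}\right) = (-1)^{\frac{q-1}{2}\deg(P)\deg(Q)}
\end{equation*}
for distinct monic irreducible polynomials $P, Q \in \mathbb{F}_q[T]$ of degrees $m$ and $n$.

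For this core case, I would fix a root $\alpha \in \overline{\mathbb{F}_q}$ of $P$, so that $\mathbb{F}_q[T]/(P) \cong \mathbb{F}_q(\alpha) = \mathbb{F}_{q^m}$, and start from the Euler criterion $\left(\frac{Q}{P}\right) \equiv Q(T)^{(|P|-1)/2} \pmod{P}$. Reducing modulo $P$ and evaluating at $\alpha$ gives $\left(\frac{Q}{P}\right) = Q(\alpha)^{(q^m-1)/2} \in \{\pm 1\}$. The key manipulation is to write $(q^m-1)/2 = \frac{q-1}{2}(1 + q + \cdots + q^{m-1})$ and apply the identity $Q(\alpha)^{q^k} = Q(\alpha^{q^k})$, valid because $Q \in \mathbb{F}_q[T]$ has Frobenius-invariant coefficients. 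Since the Galois conjugates $\alpha, \alpha^q, \ldots, \alpha^{q^{m-1}}$ exhaust the roots $\alpha_1, \ldots, \alpha_m$ of $P$, one obtains
\begin{equation*}
\left(\frac{Q}{P}\right) = \Bigl(\prod_{i=1}^{m} Q(\alpha_i)\Bigr)^{(q-1)/2} = \operatorname{Res}(P,Q)^{(q-1)/2},
\end{equation*}
and the symmetric argument gives $\left(\frac{P}{Q}\right) = \operatorname{Res}(Q,P)^{(q-1)/2}$. The classical antisymmetry $\operatorname{Res}(Q,P) = (-1)^{mn}\operatorname{Res}(P,Q)$ then produces the sign $(-1)^{mn(q-1)/2}$ claimed in the theorem.

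Finally, to pass to the alternative form $(-1)^{((|A|-1)/2)((|B|-1)/2)}$, only the exponent modulo $2$ matters. Since $q$ is odd, the geometric sum $1 + q + \cdots + q^{\deg A - 1} \equiv \deg(A) \pmod 2$, so $(|A|-1)/2 \equiv \frac{q-1}{2}\deg(A) \pmod 2$, and multiplying by the corresponding congruence for $B$ and splitting into the cases $q \equiv 1, 3 \pmod 4$ shows the two exponents agree modulo $2$.

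The main obstacle, and the only genuinely non-formal step, is the Frobenius orbit argument that promotes the single-root evaluation $Q(\alpha)^{(q^m-1)/2}$ to the full symmetric product $\prod_i Q(\alpha_i)^{(q-1)/2}$; once this identification with the resultant is in place, reciprocity falls out from the antisymmetry of the resultant, with the remaining work being multiplicativity of the symbol and a short parity check.
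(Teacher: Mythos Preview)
The paper does not actually prove this theorem: quadratic reciprocity for $\mathbb{F}_q[T]$ is stated as background (with a pointer to Rosen~\cite{Ro}, Chapters~3--4) and then used as a tool in the proof of Proposition~\ref{prop3.3}. So there is no in-paper argument to compare against; the relevant benchmark is the standard textbook proof.

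Your sketch is that standard proof, and it is correct. The identification $\left(\tfrac{Q}{P}\right)=\operatorname{Res}(P,Q)^{(q-1)/2}$ via Euler's criterion and the Frobenius orbit of a root of $P$ is exactly Dedekind--Artin's argument as presented in Rosen, and the antisymmetry $\operatorname{Res}(Q,P)=(-1)^{mn}\operatorname{Res}(P,Q)$ then delivers the sign. Two minor comments: (i) the reduction from arbitrary nonzero $A,B$ to monic irreducibles does require a short separate treatment of the leading-coefficient symbols $(a/B)=a^{(|B|-1)/2}$, which you wave at but do not carry out---this is routine but not literally a cancellation, so it would be worth one explicit line; (ii) for the equivalence of the two sign expressions, your congruence $(|A|-1)/2\equiv\tfrac{q-1}{2}\deg(A)\pmod 2$ is right, and to finish you can simply note that $x^2\equiv x\pmod 2$ applied to $x=\tfrac{q-1}{2}$ gives $\bigl(\tfrac{q-1}{2}\bigr)^2\deg(A)\deg(B)\equiv\tfrac{q-1}{2}\deg(A)\deg(B)\pmod 2$, which is cleaner than a case split on $q\bmod 4$.
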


The $L$--function attached to the character $\chi_{P}$ is defined by
\begin{equation}
L(s,\chi_{P}):=\sum_{\substack{f\in A \\ f \ \mathrm{monic}}}\frac{\chi_{P}(f)}{|f|^{s}}=\prod_{\substack{Q \ \mathrm{monic} \\ \mathrm{irreducible}}}\left(1-\frac{\chi_{P}(Q)}{|Q|^{s}}\right)^{-1}, \ \ \ \mathfrak{R}(s)>1.
\end{equation}

Henceforth we consider $P$ to be a monic irreducible polynomial such that $\mathrm{deg}(P)$ is odd and $q\equiv1(\bmod \ 4)$. Then \cite[Propositions 4.3, 14.6 and 17.7]{Ro} $L(s,\chi_{P})$ is a polynomial in $u=q^{-s}$ of degree $\mathrm{deg}(P)-1$ and
\begin{equation}
L(s,\chi_{P})=\mathcal{L}(u,\chi_{P})=L_{C_{P}}(u),
\end{equation}
where $L_{C_{P}}(u)$ is the numerator of the zeta function associated to the hyperelliptic curve given in affine form by
\begin{equation}
C_{P}:y^{2}=P(T)
\end{equation}
with
\begin{equation}
P(T)=T^{2g+1}+a_{2g}T^{2g}+\cdots+a_{1}T+a_{0}
\end{equation}
a monic irreducible polynomial in $A$ of degree $2g+1$. 

The following proposition is quoted from Rudnick \cite{Ru} and the main ingredient to establish it is the Riemann Hypothesis for curves

\begin{prop}
\label{bound}
If we assume $f\in A$ is monic, $\mathrm{deg}(f)>0$ and $f$ is not a perfect square then we have
\begin{equation}
\left|\sum_{\substack{P \ \mathrm{prime} \\ \mathrm{deg}(P)=n}}\left(\frac{f}{P}\right)\right|\ll\frac{\mathrm{deg}(f)}{n}q^{n/2}.
\end{equation}
\end{prop}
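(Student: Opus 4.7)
The plan is to realise the sum as a coefficient in the logarithmic derivative of the Dirichlet $L$-function attached to the character $\chi_f(g) := \leg{f}{g}$, and then bound that coefficient by the Riemann Hypothesis for curves (the same ingredient invoked in \S 2.1). Since $f$ is not a perfect square, $\chi_f$ is a non-trivial quadratic character on monic polynomials, and by the Weil theory underlying \eqref{eq:zetaC}--\eqref{eq:funceq} its $L$-function
\begin{equation*}
L(u,\chi_f) \;:=\; \sum_{g\text{ monic}} \chi_f(g)\,u^{\deg g} \;=\; \prod_{P\text{ monic prime}}\bigl(1-\chi_f(P)\,u^{\deg P}\bigr)^{-1}
\end{equation*}
is a polynomial in $u$ of degree at most $\deg(f)-1$, factoring as $L(u,\chi_f)=\prod_{j}(1-\gamma_j u)$ with each $|\gamma_j|\le\sqrt{q}$. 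When $f$ is not squarefree, $\chi_f$ is imprimitive and $L(u,\chi_f)$ acquires additional Euler factors $1-\chi^{*}(P)u^{\deg P}$ at primes $P\mid f$ outside the conductor, whose inverse roots have modulus $1$ and only improve the bound.

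Next, compute $-u L'(u,\chi_f)/L(u,\chi_f)$ in two ways. From the factorisation, it equals $\sum_{n\ge 1}\bigl(\sum_{j}\gamma_j^{\,n}\bigr)u^{n}$, while from the Euler product it equals $\sum_{n\ge 1}u^{n}\sum_{\deg g=n}\Lambda(g)\chi_f(g)$, where $\Lambda(g)=\deg(Q)$ if $g=Q^{k}$ for a monic prime $Q$ and is $0$ otherwise. Matching coefficients and using $|\gamma_j|\le\sqrt q$ gives
\begin{equation*}
\Bigl|\sum_{\deg g=n}\Lambda(g)\chi_f(g)\Bigr| \;\le\; \bigl(\deg f - 1\bigr)q^{\,n/2}.
\end{equation*}
Splitting $\Lambda$ into the contributions from primes and from proper prime powers,
\begin{equation*}
\sum_{\deg g=n}\Lambda(g)\chi_f(g) \;=\; n\!\sum_{\deg P=n}\!\chi_f(P) \;+\; \sum_{\substack{k\ge 2\\ k\mid n}}\frac{n}{k}\!\sum_{\deg P=n/k}\!\chi_f(P)^{k},
\end{equation*}
and Theorem~\ref{thm:pnt} bounds the prime-power tail by $\sum_{k\ge 2,\,k\mid n}(n/k)\pi_A(n/k)\ll q^{n/2}$. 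Combining these estimates and dividing through by $n$ yields $\bigl|\sum_{\deg P=n}\chi_f(P)\bigr|\ll (\deg f / n)\,q^{n/2}$, as claimed.

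The only real obstacle is the opening paragraph: verifying carefully, when $f$ is not squarefree, the factorisation of $L(u,\chi_f)$ through the primitive character inducing $\chi_f$ together with trivial Euler factors at the extra ramified primes, and checking that all inverse roots of the resulting polynomial either satisfy $|\gamma_j|\le\sqrt q$ (the non-trivial ones, by curves RH) or $|\gamma_j|\le 1$ (the trivial ones). Once this bookkeeping is in place, the remainder is a routine logarithmic-derivative computation combined with the Prime Polynomial Theorem.
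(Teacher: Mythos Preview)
The paper does not actually prove this proposition: it is quoted from Rudnick~\cite{Ru}, with the remark that the Riemann Hypothesis for curves is the main ingredient. Your proposal is precisely the standard argument behind that citation---express the von~Mangoldt--weighted character sum as the $n$th power-sum of the inverse roots of $L(u,\chi_f)$ via the logarithmic derivative, invoke Weil's theorem to bound each $|\gamma_j|\le\sqrt{q}$, and remove the prime-power contribution using Theorem~\ref{thm:pnt}. The bookkeeping you flag for non-squarefree $f$ is handled correctly: passing to the primitive character introduces only finitely many Euler factors whose inverse roots have modulus~$1$, and the total number of $\gamma_j$'s is still at most $\deg(f)-1$. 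So your proof is correct and matches the approach the paper implicitly relies on.
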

\vspace{0.5cm}

\subsection{The main results}
We now present the main results of this paper.

\begin{thm}
\label{thm1}
Let $\mathbb{F}_{q}$ be a fixed finite field of odd cardinality with $q\equiv1(\bmod \ 4)$. Then for every $\varepsilon>0$ we have,
\begin{equation}
\sum_{\substack{P \ \mathrm{monic} \\ \mathrm{irreducible} \\ \mathrm{deg}(P)=2g+1}}(\log_{q}|P|)L(\tfrac{1}{2},\chi_{P})=\frac{|P|}{2}(\log_{q}|P|+1)+O(|P|^{\tfrac{3}{4}+\varepsilon}).
\end{equation}
\end{thm}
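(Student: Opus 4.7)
\textbf{Proof plan for Theorem \ref{thm1}.}

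The plan is to exploit the fact that, since $\deg P = 2g+1$ is odd, $L(s,\chi_P)$ is a polynomial in $u = q^{-s}$ of degree $2g$. Writing $A_n(P) = \sum_{f \text{ monic},\,\deg f = n} \chi_P(f)$, one has
\begin{equation*}
L(\tfrac12,\chi_P) \;=\; \sum_{n=0}^{2g} q^{-n/2} A_n(P),
\end{equation*}
and the functional equation \eqref{eq:funceq} forces the symmetry $A_{2g-n}(P) = q^{g-n} A_n(P)$. Folding the tail, I would obtain the ``approximate functional equation''
\begin{equation*}
L(\tfrac12,\chi_P) \;=\; 2\sum_{n=0}^{g-1} q^{-n/2} A_n(P) \;+\; q^{-g/2} A_g(P),
\end{equation*}
a \emph{finite} identity of length $g+1$.

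Next I would insert this identity into the sum over $P$ and interchange the order of summation. Since $q \equiv 1 \pmod 4$, the sign factor in Theorem \ref{reciprocity} is always $+1$, so $\chi_P(f) = (f/P)$ whenever $\gcd(f,P)=1$, which is automatic here because $\deg f \le g < 2g+1 = \deg P$. Thus the inner sum becomes $\sum_{\deg P = 2g+1}(\log_q |P|)(f/P)$. I would then split according to whether $f$ is a perfect square. For $f = h^2$ with $\deg h \le g/2$, one has $(f/P) = 1$ identically, and these terms aggregate to
\begin{equation*}
(2g+1)\,\pi_A(2g+1)\left[\,2\!\!\!\sum_{\substack{0 \le n \le g-1 \\ n \text{ even}}}\!\!\!1 \;+\; [g \text{ even}]\,\right] \;=\; (2g+1)\,\pi_A(2g+1)\,(g+1),
\end{equation*}
since in each parity of $g$ the bracketed quantity equals $g+1$. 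By the Prime Polynomial Theorem, $(2g+1)\pi_A(2g+1) = |P| + O(|P|^{1/2})$, so the square contribution produces exactly
\begin{equation*}
(g+1)|P| + O(|P|^{1/2}\log_q|P|) \;=\; \tfrac{|P|}{2}\bigl(\log_q|P|+1\bigr) \;+\; O(|P|^{1/2+\varepsilon}),
\end{equation*}
which is the claimed main term.

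The main obstacle is controlling the contribution of non--square $f$, and this is where Proposition \ref{bound} is essential. For such $f$, $|\sum_{\deg P = 2g+1}(f/P)| \ll (\deg f/(2g+1))\,q^{g+1/2}$, so the total error is bounded by
\begin{equation*}
\sum_{n=0}^{g} q^{-n/2} \cdot q^{n} \cdot n \cdot q^{g+1/2} \;\ll\; g\, q^{3g/2+1/2} \;\ll\; |P|^{3/4+\varepsilon},
\end{equation*}
the worst case being $n$ close to $g$. Putting the main term and this error bound together yields the theorem. The only delicate point is verifying the approximate functional equation and checking that no edge term at $n=g$ is overlooked when $g$ is even; everything else is bookkeeping that reduces to the Prime Polynomial Theorem and Proposition \ref{bound}.
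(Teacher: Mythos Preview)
Your proposal is correct and follows essentially the same route as the paper. The paper writes the approximate functional equation as $\sum_{n=0}^{g}q^{-n/2}A_n(P)+\sum_{m=0}^{g-1}q^{-m/2}A_m(P)$ rather than your folded form $2\sum_{n=0}^{g-1}q^{-n/2}A_n(P)+q^{-g/2}A_g(P)$, and accordingly computes the square contribution as $\bigl[\tfrac{g}{2}\bigr]+\bigl[\tfrac{g-1}{2}\bigr]+2=g+1$ instead of your parity count, but the decomposition into squares versus non--squares, the appeal to the Prime Polynomial Theorem for the main term, and the use of Proposition~\ref{bound} to bound the non--square error by $O(gq^{3g/2})=O(|P|^{3/4+\varepsilon})$ are identical.
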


This theorem also appears as part of the Ph.D thesis \cite{And} of the first author. This is the exact function field analogue of Jutila's result \eqref{jutilaprime} for number--fields. Note that the function field theorem above has a saving in the error term when compared with the number--field result \eqref{jutilaprime}. 

\begin{thm}
\label{thm2}
Using the same notation as before, for a fixed finite field $\mathbb{F}_{q}$ we have
\begin{multline}
\sum_{\substack{P \ \mathrm{monic} \\ \mathrm{irreducible} \\ \mathrm{deg}(P)=2g+1}}L(\tfrac{1}{2},\chi_{P})^{2}=\frac{1}{24}\frac{1}{\zeta_{A}(2)}|P|(\log_{q}|P|)^{2}+O(|P|(\log_{q}|P|)).\ \ \ \ \ \ \ \ \ \ \ \ \ \ \ \ \ \ \ \ \ \ \ \ \ \ \ \ \ \ \ \ \ \ 
\end{multline}
\end{thm}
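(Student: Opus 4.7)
The plan is to adapt the proof of Theorem \ref{thm1} by starting from an exact form of the approximate functional equation for the \emph{square} of the $L$-function. Since $\mathcal{L}(u,\chi_P)$ is a polynomial in $u=q^{-s}$ of degree $2g$, its square has degree $4g$, and the functional equation \eqref{eq:funceq} induces a symmetry $b_n q^{-n/2}=b_{4g-n}q^{-(4g-n)/2}$ among the coefficients evaluated at $s=\tfrac12$. Folding about the midpoint $n=2g$ yields
\begin{equation*}
L(\tfrac{1}{2},\chi_P)^{2}=2\sum_{\substack{f\ \mathrm{monic}\\ \deg f\le 2g-1}}\frac{d(f)\chi_P(f)}{|f|^{1/2}}+\sum_{\substack{f\ \mathrm{monic}\\ \deg f=2g}}\frac{d(f)\chi_P(f)}{|f|^{1/2}},
\end{equation*}
where $d(f)$ is the divisor function on $A$.

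Summing over monic irreducibles $P$ of degree $2g+1$ and interchanging summations reduces matters to the character sums $\sum_{\deg P=2g+1}\chi_P(f)$. By Theorem \ref{reciprocity}, together with $q\equiv 1\pmod 4$ and $\deg P$ odd, the reciprocity sign is trivial and $\chi_P(f)=(f/P)$. For non-square $f$, Proposition \ref{bound} gives
\begin{equation*}
\Big|\sum_{\deg P=2g+1}\chi_P(f)\Big|\ll\frac{\deg f}{2g+1}\,q^{(2g+1)/2},
\end{equation*}
and using $\sum_{\deg f=n}d(f)=(n+1)q^n$ (obtained from $\zeta_A(s)^2$) bounds the total non-square contribution by $O(|P|\log_q|P|)$.

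The main term comes from the squares $f=h^2$ with $\deg h\le g-1$; for such $f$, $\chi_P(h^2)=1$ whenever $P\nmid h$, so by Theorem \ref{thm:pnt} the leading contribution is $2\pi_A(2g+1)\sum_{\deg h\le g-1}d(h^2)/|h|$. From the identity
\begin{equation*}
\sum_{h\ \mathrm{monic}}d(h^2)u^{\deg h}=\frac{1-qu^2}{(1-qu)^3}
\end{equation*}
(which is $\zeta_A(s)^3/\zeta_A(2s)$ with $u=q^{-s}$), extracting coefficients and summing gives
\begin{equation*}
\sum_{\substack{h\ \mathrm{monic}\\ \deg h\le g-1}}\frac{d(h^2)}{|h|}=\frac{g(g+1)(g+2)}{6}-\frac{g(g-1)(g-2)}{6q}=\frac{g^{3}}{6\zeta_A(2)}+O(g^{2}),
\end{equation*}
using $1/\zeta_A(2)=1-1/q$. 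Multiplying by $2\pi_A(2g+1)\sim 2|P|/(2g+1)$ and writing $4g^2\sim(\log_q|P|)^2$ yields the claimed main term $\frac{1}{24\zeta_A(2)}|P|(\log_q|P|)^2$; the boundary piece $\sum_{\deg f=2g}$ is handled by the same splitting and falls into the $O(|P|\log_q|P|)$ error.

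The principal obstacle is the uniform control of the non-square character sums weighted by $d(f)/|f|^{1/2}$ out to degrees close to $2g$: because $d(f)$ can be as large as $|f|^{\varepsilon}$, one must be careful when summing the estimate of Proposition \ref{bound} across the full length of the approximate functional equation, so that the resulting error stays of order $|P|\log_q|P|$ rather than something larger. A secondary point is the treatment of the boundary term $\deg f=2g$, which involves the same character-sum estimate applied to a single, shorter sum and is absorbed into the error.
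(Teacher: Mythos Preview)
Your proof is correct and follows the same overall strategy as the paper: write $L(\tfrac12,\chi_P)^2$ via the exact functional equation for $\mathcal L(u,\chi_P)^2$, split the coefficient sum into squares and non-squares, extract the main term from the squares via $\sum_{\deg h\le m}d(h^2)/|h|$, and bound the non-square contribution using Proposition~\ref{bound} together with the average order of $d(f)$. Your folded form $2\sum_{\deg f\le 2g-1}+\sum_{\deg f=2g}$ is just a regrouping of the paper's $\sum_{\deg f\le 2g}+\sum_{\deg f\le 2g-1}$ (Lemma~\ref{funcional2}), so the decomposition is literally the same.

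The one noteworthy difference is in how the main term is evaluated. The paper passes through the Tauberian-type Lemma~\ref{lem4.3} to obtain the asymptotic $\sum_{\deg f=n}d(f^2)=\tfrac{1}{2\zeta_A(2)}q^n n^2+O(q^n n)$ and then sums in $n$. You instead read off the exact coefficients from the identity $\sum_h d(h^2)u^{\deg h}=(1-qu^2)/(1-qu)^3$ and sum the resulting binomial coefficients in closed form. Your route is more elementary and in fact sharper (it gives exact polynomials in $g$ rather than asymptotics), at the cost of being specific to this particular Dirichlet series; the paper's lemma is a reusable black box. Either way, both computations reduce to $\sum_{m\le g}m^2\sim g^3/3$ and give the same leading constant $\tfrac{1}{24\zeta_A(2)}$. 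Your concern about the non-square error is unwarranted here: using $\sum_{\deg f=n}d(f)=(n+1)q^n$ exactly as you do, the bound is $\ll \tfrac{\sqrt{|P|}}{2g+1}\sum_{n\le 2g}n^2 q^{n/2}\ll |P|g$, which is precisely what the paper obtains in Proposition~\ref{prop4.7}.
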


We have the following Corollary

\begin{cor}
\begin{equation}
\sum_{\substack{P \ \mathrm{monic} \\ \mathrm{irreducible} \\ \mathrm{deg}(P)=2g+1 \\ L(\tfrac{1}{2},\chi_{P})\neq0}}1\gg\frac{|P|}{(\log_{q}|P|)^{2}}.
\end{equation}
\end{cor}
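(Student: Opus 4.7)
The plan is to derive the corollary by a standard Cauchy--Schwarz argument, using Theorem \ref{thm1} to provide a large lower bound on the first moment (i.e.\ without squares) and Theorem \ref{thm2} to bound the second moment from above. Since $\log_q|P| = 2g+1$ is a constant over the family, it may be pulled outside the sum in Theorem \ref{thm1}, giving
\begin{equation*}
\sum_{\substack{P \ \mathrm{monic\ irreducible} \\ \mathrm{deg}(P)=2g+1}} L(\tfrac12,\chi_P) \;=\; \frac{|P|}{2}\cdot\frac{2g+2}{2g+1} + O\!\left(\frac{|P|^{3/4+\varepsilon}}{\log_q|P|}\right) \;\gg\; |P|.
\end{equation*}

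Next, letting $N$ denote the number of $P$ in the family with $L(\tfrac12,\chi_P)\neq 0$, write $L(\tfrac12,\chi_P) = \mathbf{1}[L(\tfrac12,\chi_P)\neq 0]\cdot L(\tfrac12,\chi_P)$ and apply the Cauchy--Schwarz inequality to obtain
\begin{equation*}
\left(\sum_{P}L(\tfrac12,\chi_P)\right)^{2} \;\le\; N\cdot \sum_{P}L(\tfrac12,\chi_P)^{2}.
\end{equation*}
Inserting the lower bound on the first moment from above into the left-hand side, and the upper bound $\sum_P L(\tfrac12,\chi_P)^2 \ll |P|(\log_q|P|)^2$ from Theorem \ref{thm2} into the right-hand side, yields
\begin{equation*}
|P|^{2} \;\ll\; N\cdot |P|(\log_q|P|)^{2},
\end{equation*}
which rearranges to the claimed bound $N \gg |P|/(\log_q|P|)^{2}$.

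There is really no obstacle here: the entire content of the corollary is packaged in Theorems \ref{thm1} and \ref{thm2}, and the Cauchy--Schwarz deduction is completely routine. The only minor point to verify is that one does not need positivity of $L(\tfrac12,\chi_P)$ for the argument to run, since Cauchy--Schwarz directly uses the square of the (signed) first moment; what is essential is only that the first moment is not too small, which is guaranteed by the fact that the main term in Theorem \ref{thm1} is of order $|P|\log_q|P|$ and dominates the error term of size $|P|^{3/4+\varepsilon}$.
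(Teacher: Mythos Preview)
Your proof is correct and follows essentially the same approach as the paper: both derive the corollary by applying the Cauchy--Schwarz inequality, using Theorem \ref{thm1} for a lower bound on the first moment and Theorem \ref{thm2} for an upper bound on the second moment. Your version is slightly more explicit in spelling out the extraction of the constant $\log_q|P|$ and in noting that positivity of $L(\tfrac12,\chi_P)$ is not required, but the argument is the same.
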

\begin{proof}
From Theorems \ref{thm1} and \ref{thm2} we have
\begin{equation}
\label{cor1}
\sum_{\substack{P \ \mathrm{monic} \\ \mathrm{irreducible} \\ \mathrm{deg}(P)=2g+1}}L(\tfrac{1}{2},\chi_{P})\sim k_{1}|P|
\end{equation}
and
\begin{equation}
\label{cor2}
\sum_{\substack{P \ \mathrm{monic} \\ \mathrm{irreducible} \\ \mathrm{deg}(P)=2g+1}}L(\tfrac{1}{2},\chi_{P})^{2}\sim k_{2}|P|(\log_{q}|P|)^{2},
\end{equation}
where $k_{1}$ and $k_{2}$ are the constants given in the above theorems. By Cauchy-–Schwarz inequality follows that the number of monic irreducible polynomials $P$ with $\mathrm{deg}(P)=2g+1$ such that $L(\tfrac{1}{2},\chi_{P})\neq0$ exceeds the ratio of the square of the quantity in \eqref{cor1} to the quantity in \eqref{cor2}.
\end{proof}
%\begin{thm}
%\label{thm3}
%\begin{equation}
%\sum_{\substack{P \ \mathrm{monic} \\ \mathrm{irreducible} \\ \mathrm{deg}(P)=2g+1}}L(\tfrac{1}{2},\chi_{P})^{3}=
%\end{equation}
%\end{thm}

\section{The First Moment}

Setting $D=P$ in Lemma 3.3 from \cite{AK}, we may write $L(\tfrac{1}{2},\chi_{P})$ as

\begin{equation}
\label{eq3.1}
L(\tfrac{1}{2},\chi_{P})=\sum_{n=0}^{g}\sum_{\substack{f_{1} \ \mathrm{monic} \\ \mathrm{deg}(f_{1})=n}}\chi_{P}(f_{1})q^{-\tfrac{n}{2}}+\sum_{m=0}^{g-1}\sum_{\substack{f_{2} \ \mathrm{monic} \\ \mathrm{deg}(f_{2})=m}}\chi_{P}(f_{2})q^{-\tfrac{m}{2}}.
\end{equation}

We need to average both double sums in the right--hand side of \eqref{eq3.1} over monic irreducible polynomials of degree $2g+1$. However they are clearly related and we will only need to calculate one of them to obtain the result for the other. Therefore we will focus on the average of the first double sum in \eqref{eq3.1}. We can write this as
\begin{multline}
\label{eq3.2}
\sum_{n=0}^{g}\sum_{\substack{f_{1} \ \mathrm{monic} \\ \mathrm{deg}(f_{1})=n}}\chi_{P}(f_{1})q^{-\tfrac{n}{2}}\\
=\sum_{n=0}^{g}\sum_{\substack{f_{1} \ \mathrm{monic} \\ \mathrm{deg}(f_{1})=n \\ f_{1}=\square}}\chi_{P}(f_{1})q^{-\tfrac{n}{2}}+\sum_{n=0}^{g}\sum_{\substack{f_{1} \ \mathrm{monic} \\ \mathrm{deg}(f_{1})=n \\ f_{1}\neq\square}}\chi_{P}(f_{1})q^{-\tfrac{n}{2}}.
\end{multline}

\subsection{Square Contributions --The Main Term}
In this section we focus our attention on the average of the first double sum in the right hand side of \eqref{eq3.2}. The main result  is 

\begin{prop}
\label{prop3.1}
We have that,
\begin{equation}
\sum_{\substack{P \ \mathrm{monic} \\ \mathrm{irreducible} \\ \mathrm{deg}(P)=2g+1}}\sum_{n=0}^{g}\sum_{\substack{f_{1} \ \mathrm{monic} \\ \mathrm{deg}(f_{1})=n \\ f_{1}=\square}}\chi_{P}(f_{1})q^{-\tfrac{n}{2}}=\frac{|P|}{\log_{q}|P|}\left(\left[\frac{g}{2}\right]+1\right)+O\left(\frac{\sqrt{|P|}}{\log_{q}|P|}g\right),\nonumber
\end{equation}
where $[x]$ denotes the integer part of $x$.
\end{prop}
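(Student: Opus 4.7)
The plan is to evaluate the inner triple sum exactly (up to an error coming only from the Prime Polynomial Theorem), because the square condition collapses the character sum. First I would parametrise the squares: write $f_1 = h^2$ with $h$ monic of degree $n/2$, so the inner sum over $f_1$ is nonempty only when $n$ is even. For such an $h$, multiplicativity of the quadratic residue symbol gives $\chi_P(h^2)=\left(\tfrac{P}{h}\right)^2$, which is $1$ when $\gcd(h,P)=1$ and $0$ when $P\mid h$. Since $\deg(h)\le g/2 < 2g+1 = \deg(P)$, the divisibility $P\mid h$ never occurs, so every term contributes exactly $1$.

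Setting $m=n/2$, the $P$-independent inner sum becomes
\begin{equation*}
\sum_{n=0}^{g}\sum_{\substack{f_1 \text{ monic}\\ \deg(f_1)=n\\ f_1=\square}}\chi_P(f_1)q^{-n/2}
=\sum_{m=0}^{[g/2]} q^{-m}\cdot q^{m}
=\left[\tfrac{g}{2}\right]+1,
\end{equation*}
since the number of monic polynomials of degree $m$ is $q^m$. Because this value does not depend on $P$, the outer sum over monic irreducibles of degree $2g+1$ just multiplies by $\pi_A(2g+1)$.

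Finally, I would invoke the Prime Polynomial Theorem (Theorem \ref{thm:pnt}): with $|P|=q^{2g+1}$ and $\log_q|P|=2g+1$, we get
\begin{equation*}
\pi_A(2g+1)=\frac{|P|}{\log_q|P|}+O\!\left(\frac{\sqrt{|P|}}{\log_q|P|}\right).
\end{equation*}
Multiplying by $[g/2]+1$ produces the stated main term $\frac{|P|}{\log_q|P|}([g/2]+1)$ and an error of size $O(\sqrt{|P|}\,g/\log_q|P|)$, matching the proposition exactly.

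There is essentially no obstacle here: the only substantive input is that $\deg(h)<\deg(P)$ forces $\chi_P(h^2)=1$ identically, after which the claim is a direct application of the Prime Polynomial Theorem. The mildly delicate point is bookkeeping the parity of $n$ and checking that the odd-$n$ contribution to the "square" sum is vacuous, so that the geometric cancellation $q^{-m}\cdot q^m=1$ produces a clean count.
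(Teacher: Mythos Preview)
Your proof is correct and follows essentially the same approach as the paper: parametrise the squares, observe that $\deg(h)<\deg(P)$ forces $\chi_P(h^2)=1$, so the inner sum collapses to $[g/2]+1$, and then apply the Prime Polynomial Theorem to the outer sum over $P$. The only cosmetic difference is that you first evaluate the inner sum as $P$-independent and then multiply by $\pi_A(2g+1)$, whereas the paper keeps the sums nested and inserts the Prime Polynomial Theorem inside; the arithmetic is identical.
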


\begin{proof}
We have,
\begin{eqnarray}
& &\sum_{\substack{P \ \mathrm{monic} \\ \mathrm{irreducible} \\ \mathrm{deg}(P)=2g+1}}\sum_{n=0}^{g}\sum_{\substack{f_{1} \ \mathrm{monic} \\ \mathrm{deg}(f_{1})=n \\ f_{1}=\square}}\chi_{P}(f_{1})q^{-\tfrac{n}{2}}\nonumber\\
&=&\sum_{\substack{n=0 \\ 2\mid n}}^{g}q^{-\tfrac{n}{2}}\sum_{\substack{l \ \mathrm{monic} \\ \mathrm{deg}(l)=\tfrac{n}{2}}}\sum_{\substack{P \ \mathrm{monic} \\ \mathrm{irreducible} \\ \mathrm{deg}(P)=2g+1}}\chi_{P}(l^{2})=\sum_{\substack{n=0 \\ 2\mid n}}^{g}q^{-\tfrac{n}{2}}\sum_{\substack{l \ \mathrm{monic} \\ \mathrm{deg}(l)=\tfrac{n}{2}}}\sum_{\substack{P \ \mathrm{monic} \\ \mathrm{irreducible} \\ \mathrm{deg}(P)=2g+1 \\ (P,l)=1}}1\nonumber\\
&=&\sum_{\substack{n=0 \\ 2\mid n}}^{g}q^{-\tfrac{n}{2}}\sum_{\substack{l \ \mathrm{monic} \\ \mathrm{deg}(l)=\tfrac{n}{2}}}\sum_{\substack{P \ \mathrm{monic} \\ \mathrm{irreducible} \\ \mathrm{deg}(P)=2g+1}}1,\nonumber
\end{eqnarray}
where we obtain the last line from the fact that $\mathrm{deg}(P)=2g+1>\mathrm{deg}(l)$. Making use of the Prime Polynomial Theorem \ref{thm:pnt} we can write\begin{multline}
\sum_{\substack{P \ \mathrm{monic} \\ \mathrm{irreducible} \\ \mathrm{deg}(P)=2g+1}}\sum_{n=0}^{g}\sum_{\substack{f_{1} \ \mathrm{monic} \\ \mathrm{deg}(f_{1})=n \\ f_{1}=\square}}\chi_{P}(f_{1})q^{-\tfrac{n}{2}}\\
=\sum_{\substack{n=0 \\ 2\mid n}}^{g}q^{-\tfrac{n}{2}}\sum_{\substack{l \ \mathrm{monic} \\ \mathrm{deg}(l)=\tfrac{n}{2}}}\left(\frac{q^{2g+1}}{2g+1}+O\left(\frac{q^{g}}{2g+1}\right)\right)\nonumber\\ \\
=\frac{q^{2g+1}}{2g+1}\sum_{m=0}^{\left[\tfrac{g}{2}\right]}1+O\left(\frac{q^{g}}{2g+1}\sum_{m=0}^{\left[\tfrac{g}{2}\right]}1\right)\ \ \ \ \ \ \ \nonumber\\ \\
=\frac{|P|}{\log_{q}|P|}\left(\left[\frac{g}{2}\right]+1\right)+O\left(\frac{\sqrt{|P|}}{\log_{q}|P|}g\right).\ \ \ \ \ \ \ \ \ \ \ \ \ \ \ \ \ \ \ \ \ \nonumber
\end{multline}
\end{proof}

In an analogous way we can prove that

\begin{prop}
\label{prop3.2}
\begin{multline}
\sum_{\substack{P \ \mathrm{monic} \\ \mathrm{irreducible} \\ \mathrm{deg}(P)=2g+1}}\sum_{m=0}^{g-1}\sum_{\substack{f_{2} \ \mathrm{monic} \\ \mathrm{deg}(f_{2})=m \\ f_{2}=\square}}\chi_{P}(f_{2})q^{-\tfrac{m}{2}}\nonumber\\
=\frac{|P|}{\log_{q}|P|}\left(\left[\frac{g-1}{2}\right]+1\right)+O\left(\frac{\sqrt{|P|}}{\log_{q}|P|}g\right).
\end{multline}
\end{prop}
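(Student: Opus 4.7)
The plan is to mirror the proof of Proposition \ref{prop3.1} essentially verbatim, since the two statements differ only in the range of the outer summation ($0 \le m \le g-1$ instead of $0 \le n \le g$) and the corresponding floor function in the main term. I do not expect any genuine obstacle; the whole difficulty, such as it is, lies in checking that the coprimality condition $(P,l)=1$ still comes for free in this modified range.

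The first step is to note that $f_2 = \square$ forces $\deg(f_2) = m$ to be even, and to write $f_2 = l^2$ with $l$ monic of degree $m/2$. After interchanging the order of summation, the quantity to estimate becomes
$$\sum_{\substack{m=0 \\ 2\mid m}}^{g-1} q^{-m/2} \sum_{\substack{l \ \mathrm{monic} \\ \mathrm{deg}(l)=m/2}} \sum_{\substack{P \ \mathrm{monic} \\ \mathrm{irreducible} \\ \mathrm{deg}(P)=2g+1}} \chi_P(l^2).$$

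Next, since $\mathrm{deg}(l) \le (g-1)/2 < 2g+1 = \mathrm{deg}(P)$ and $P$ is prime, one has $(P,l)=1$ automatically, so $\chi_P(l^2) = 1$. The innermost sum thus collapses to $\pi_A(2g+1)$, which by the Prime Polynomial Theorem \ref{thm:pnt} equals $q^{2g+1}/(2g+1) + O(q^g/(2g+1))$. The inner sum over $l$ of degree $m/2$ contributes exactly $q^{m/2}$ polynomials, cancelling the weight $q^{-m/2}$ perfectly, so each admissible $m$ contributes the same amount.

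Finally, the number of even $m \in [0, g-1]$ is $\lfloor (g-1)/2 \rfloor + 1$. Identifying $|P| = q^{2g+1}$ and $\log_q |P| = 2g+1$, the main term comes out to $\frac{|P|}{\log_q |P|}\bigl(\lfloor (g-1)/2 \rfloor + 1\bigr)$ and the error term to $O\bigl(\frac{\sqrt{|P|}}{\log_q |P|}\, g\bigr)$, yielding the statement. The only substantive novelty relative to Proposition \ref{prop3.1} is the shifted cap on $m$, which is precisely what produces $\lfloor (g-1)/2 \rfloor$ in place of $\lfloor g/2 \rfloor$.
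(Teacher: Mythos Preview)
Your proposal is correct and follows exactly the approach the paper itself takes: the paper does not give a separate proof of Proposition~\ref{prop3.2} but simply says ``In an analogous way we can prove that'', referring back to the argument for Proposition~\ref{prop3.1}. Your write-up faithfully reproduces that argument with the outer range shifted to $0\le m\le g-1$, correctly observes that $(P,l)=1$ is automatic since $\deg(l)<\deg(P)$, and arrives at the count $\lfloor (g-1)/2\rfloor+1$ of even $m$ in the new range.
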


\subsection{Contributions of non--squares}

In this section we prove the following result
\begin{prop}
\label{prop3.3}
\begin{equation}
\sum_{\substack{P \ \mathrm{monic} \\ \mathrm{irreducible} \\ \mathrm{deg}(P)=2g+1}}\sum_{n=0}^{g}\sum_{\substack{f_{1} \ \mathrm{monic} \\ \mathrm{deg}(f_{1})=n \\ f_{1}\neq\square}}\chi_{P}(f_{1})q^{-\tfrac{n}{2}}=O\left(\frac{q^{\tfrac{3}{2}g}}{\log_{q}|P|}g\right).\nonumber
\end{equation}
\end{prop}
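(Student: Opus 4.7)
The plan is to interchange the order of summation, putting the sum over $P$ on the inside, and then apply quadratic reciprocity together with Proposition~\ref{bound}. Because $q\equiv 1\pmod 4$, the factor $(-1)^{((q-1)/2)\deg(A)\deg(B)}$ in Theorem~\ref{reciprocity} is identically~$1$, so for a non-square monic $f_{1}$ of positive degree coprime to $P$ (which is automatic, since $\deg(P)=2g+1>g\ge\deg(f_{1})$), we have $\chi_{P}(f_{1})=(P/f_{1})=(f_{1}/P)$.

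After swapping, the non-square contribution becomes
\[
\sum_{n=1}^{g} q^{-n/2}\sum_{\substack{f_{1}\text{ monic}\\ \deg(f_{1})=n\\ f_{1}\ne\square}}\ \sum_{\substack{P\text{ monic}\\ \text{irreducible}\\ \deg(P)=2g+1}}\left(\frac{f_{1}}{P}\right),
\]
where $n$ starts at $1$ because the only monic polynomial of degree $0$ is $1$, which is a square. Now Proposition~\ref{bound}, applied with the outer prime-sum of degree $2g+1$, yields
\[
\left|\sum_{\substack{P\text{ monic}\\ \text{irreducible}\\ \deg(P)=2g+1}}\left(\frac{f_{1}}{P}\right)\right|\ll\frac{n}{2g+1}\,q^{(2g+1)/2}
\]
uniformly for non-square monic $f_{1}$ of degree $n\ge 1$.

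Inserting this bound, estimating the number of monic $f_{1}$ of degree $n$ trivially by $q^{n}$, and summing in $n$, we obtain
\[
\sum_{n=1}^{g} q^{-n/2}\cdot q^{n}\cdot \frac{n}{2g+1}\,q^{(2g+1)/2}
=\frac{q^{g+1/2}}{2g+1}\sum_{n=1}^{g} n\,q^{n/2}.
\]
The inner geometric-type sum is dominated by its final term, $\sum_{n=1}^{g} n q^{n/2}\ll g\,q^{g/2}$ (with implied constant depending only on the fixed $q$). Combining and using $|P|=q^{2g+1}$ and $\log_{q}|P|=2g+1$ gives the claimed bound
\[
\frac{g\,q^{g+1/2}\cdot q^{g/2}}{2g+1}\ll\frac{q^{3g/2}}{\log_{q}|P|}\,g.
\]

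There is no real obstacle: the proof is essentially an application of Rudnick's character-sum estimate (Proposition~\ref{bound}) after using quadratic reciprocity to move $P$ into the denominator of the symbol. The only point that requires attention is to verify that the sign in reciprocity disappears under the hypothesis $q\equiv 1\pmod 4$, and that the trivial bound $q^{n}$ on the number of non-square $f_{1}$ of degree $n$ is sufficient, since the weight $q^{-n/2}$ combined with Rudnick's saving of $q^{(2g+1)/2}/n^{-1}$ keeps the whole expression well below the main term established in Propositions~\ref{prop3.1} and~\ref{prop3.2}.
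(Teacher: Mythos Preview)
Your proof is correct and follows essentially the same approach as the paper: interchange the order of summation, apply quadratic reciprocity to convert $\chi_{P}(f_{1})=(P/f_{1})$ into $(f_{1}/P)$, invoke Proposition~\ref{bound}, and then sum trivially over $f_{1}$. The only minor difference is that the paper handles the sign in reciprocity by observing it is constant over all $P$ of degree $2g+1$ (so it drops out in absolute value), whereas you use the standing hypothesis $q\equiv 1\pmod 4$ to see the sign is identically~$1$; both are valid and lead to the same estimate.
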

\begin{proof}
Let $f_{1}\in\mathbb{F}_{q}[T]$ be a fixed monic nonsquare polynomial such that $\mathrm{deg}(f_{1})<\mathrm{deg}(P)=2g+1$. By the quadratic reciprocity law, Theorem \ref{reciprocity}, we have
\begin{equation}
\left(\frac{P}{f_{1}}\right)=(-1)^{\tfrac{q-1}{2}(2g+1)(\mathrm{deg}(f_{1}))}\left(\frac{f_{1}}{P}\right).
\end{equation}
Note that the sign $(-1)^{\tfrac{q-1}{2}(2g+1)(\mathrm{deg}(f_{1}))}$ is the same for all monic irreducible polynomials $P$ of degree $2g+1$, so
\begin{equation}
\left|\sum_{\substack{P \ \mathrm{monic} \\ \mathrm{irreducible} \\ \mathrm{deg}(P)=2g+1}}\left(\frac{P}{f_{1}}\right)\right|=\left|\sum_{\substack{P \ \mathrm{monic} \\ \mathrm{irreducible} \\ \mathrm{deg}(P)=2g+1}}\left(\frac{f_{1}}{P}\right)\right|.
\end{equation}
Thus we can write
\begin{equation}
\sum_{\substack{P \ \mathrm{monic} \\ \mathrm{irreducible} \\ \mathrm{deg}(P)=2g+1}}\sum_{n=0}^{g}\sum_{\substack{f_{1} \ \mathrm{monic} \\ \mathrm{deg}(f_{1})=n \\ f_{1}\neq\square}}\chi_{P}(f_{1})q^{-\tfrac{n}{2}}\ll\sum_{n=0}^{g}\sum_{\substack{f_{1} \ \mathrm{monic} \\ \mathrm{deg}(f_{1})=n \\ f_{1}\neq\square}}q^{-\tfrac{n}{2}}\left|\sum_{\substack{P \ \mathrm{monic} \\ \mathrm{irreducible} \\ \mathrm{deg}(P)=2g+1}}\left(\frac{f_{1}}{P}\right)\right|\nonumber
\end{equation}
and using the bound for character sums over prime polynomials given in Proposition \ref{bound} we have,
\begin{eqnarray}
\sum_{\substack{P \ \mathrm{monic} \\ \mathrm{irreducible} \\ \mathrm{deg}(P)=2g+1}}\sum_{n=0}^{g}\sum_{\substack{f_{1} \ \mathrm{monic} \\ \mathrm{deg}(f_{1})=n \\ f_{1}\neq\square}}\chi_{P}(f_{1})q^{-\tfrac{n}{2}}&\ll&\sum_{n=0}^{g}q^{-\tfrac{n}{2}}\sum_{\substack{f_{1} \ \mathrm{monic} \\ \mathrm{deg}(f_{1})=n}}n\frac{q^{g}}{2g+1}\nonumber\\
&\ll& \frac{\sqrt{|P|}}{\log_{q}|P|}gq^{\tfrac{g}{2}},\nonumber
\end{eqnarray}
which proves the proposition.
\end{proof}

We can prove a corresponding estimate for the dual sum in \eqref{eq3.1} using the same approach. In the end we have

\begin{prop}
\label{prop3.4}
\begin{equation}
\sum_{\substack{P \ \mathrm{monic} \\ \mathrm{irreducible} \\ \mathrm{deg}(P)=2g+1}}\sum_{m=0}^{g-1}\sum_{\substack{f_{2} \ \mathrm{monic} \\ \mathrm{deg}(f_{2})=m \\ f_{2}\neq\square}}\chi_{P}(f_{2})q^{-\tfrac{m}{2}}=O\left(\frac{q^{\tfrac{3}{2}g}}{\log_{q}|P|}g\right).\nonumber
\end{equation}
\end{prop}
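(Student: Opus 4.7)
The plan is to mimic the proof of Proposition \ref{prop3.3} verbatim, since the inner double sum in this statement differs from the one treated there only in its upper summation range ($m\le g-1$ instead of $n\le g$), and this difference is asymptotically invisible. Fix a monic nonsquare $f_{2}$ with $\mathrm{deg}(f_{2})=m<2g+1=\mathrm{deg}(P)$; then by quadratic reciprocity (Theorem \ref{reciprocity}),
\begin{equation}
\chi_{P}(f_{2})=\left(\frac{P}{f_{2}}\right)=(-1)^{\tfrac{q-1}{2}(2g+1)m}\left(\frac{f_{2}}{P}\right),\nonumber
\end{equation}
and the sign depends only on $m$ (not on $P$), so it can be pulled outside the sum over $P$. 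Hence the absolute value of the full sum is bounded by
\begin{equation}
\sum_{m=0}^{g-1}q^{-m/2}\sum_{\substack{f_{2} \ \mathrm{monic} \\ \mathrm{deg}(f_{2})=m \\ f_{2}\neq\square}}\left|\sum_{\substack{P \ \mathrm{monic} \\ \mathrm{irreducible} \\ \mathrm{deg}(P)=2g+1}}\left(\frac{f_{2}}{P}\right)\right|.\nonumber
\end{equation}

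Next I would apply Proposition \ref{bound}, which is available since $f_{2}$ is a monic nonsquare of positive degree (for $m=0$ the inner sum over $f_{2}\ne\square$ is empty, so this range contributes nothing). This gives the bound $\ll \tfrac{m}{2g+1}q^{g}$ for the innermost sum over $P$. Since there are at most $q^{m}$ monic polynomials of degree $m$, the whole expression is
\begin{equation}
\ll \sum_{m=0}^{g-1} q^{-m/2}\cdot q^{m}\cdot \frac{m\,q^{g}}{2g+1}=\frac{q^{g}}{2g+1}\sum_{m=0}^{g-1} m\, q^{m/2}.\nonumber
\end{equation}
The remaining sum is a geometric-type sum dominated by its largest term, so $\sum_{m=0}^{g-1} m q^{m/2}\ll g\,q^{g/2}$. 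Combining, the total is $\ll \tfrac{g\,q^{3g/2}}{2g+1}=O\!\left(\tfrac{q^{3g/2}}{\log_{q}|P|}g\right)$, as required.

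There is no real obstacle here, since the argument is structurally identical to that of Proposition \ref{prop3.3}; the only mildly delicate point is being careful that the quadratic reciprocity sign is constant over $P$ of fixed degree $2g+1$ (which it is, because it depends only on $\mathrm{deg}(P)$ and $\mathrm{deg}(f_{2})$), so that the flip $\chi_{P}(f_{2})\leftrightarrow (f_{2}/P)$ can be performed inside the absolute value without affecting the bound.
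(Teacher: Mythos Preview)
Your proof is correct and follows exactly the approach the paper indicates: it states Proposition~\ref{prop3.4} without a separate proof, noting only that the corresponding estimate for the dual sum is obtained ``using the same approach'' as in Proposition~\ref{prop3.3}. Your argument reproduces that approach faithfully, including the careful observation that the reciprocity sign is constant over $P$ of fixed degree and the handling of the empty $m=0$ contribution.
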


\subsection{Proof of the Theorem for the First Moment}

We are now in a position to prove Theorem \ref{thm1}.

\begin{proof}[Proof of Theorem \ref{thm1}]
We can write
\begin{multline}
\sum_{\substack{P \ \mathrm{monic} \\ \mathrm{irreducible} \\ \mathrm{deg}(P)=2g+1}}(\log_{q}|P|)L(\tfrac{1}{2},\chi_{P})\nonumber\\
=\sum_{\substack{P \ \mathrm{monic} \\ \mathrm{irreducible} \\ \mathrm{deg}(P)=2g+1}}(\log_{q}|P|)\left(\sum_{n=0}^{g}\sum_{\substack{f_{1} \ \mathrm{monic} \\ \mathrm{deg}(f_{1})=n}}\chi_{P}(f_{1})q^{-\tfrac{n}{2}}+\sum_{m=0}^{g-1}\sum_{\substack{f_{2} \ \mathrm{monic} \\ \mathrm{deg}(f_{2})=m}}\chi_{P}(f_{2})q^{-\tfrac{m}{2}}\right)
\end{multline} 
Making use of Propositions \ref{prop3.1}, \ref{prop3.2}, \ref{prop3.3} and \ref{prop3.4} we establish that
\begin{multline}
\sum_{\substack{P \ \mathrm{monic} \\ \mathrm{irreducible} \\ \mathrm{deg}(P)=2g+1}}(\log_{q}|P|)L(\tfrac{1}{2},\chi_{P})=|P|\left(\left[\frac{g}{2}\right]+\left[\frac{g-1}{2}\right]+2\right)+O(q^{\tfrac{3g}{2}}g).\nonumber
\end{multline}
and using that
\begin{equation}
\left[\frac{g}{2}\right]+\left[\frac{g-1}{2}\right]=g-1
\end{equation}
and
\begin{equation}
g+1=\frac{\log_{q}|P|}{2}+\frac{1}{2}
\end{equation}
we conclude the proof of the theorem.
\end{proof}

\section{The Second Moment}

In this section we prove the Theorem \ref{thm2}.

\subsection{Secondary Lemmas}

We will need some auxiliary lemmas before we proceed to the proof of Theorem \ref{thm2}.

The starting point is a representation for $L(\tfrac{1}{2},\chi_{P})^{2}$ which can be viewed as the analogue of the approximate functional equation for a quadratic Dirichlet $L$-function (Lemma 3 in \cite{J}). In this case there is no error term and the formula is exact.

\begin{lem}
\label{funcional2}
Let $\chi_{P}$ be the quadratic Dirichlet character associated to the monic irreducible polynomial $P\in A$. Then
\begin{equation}
\label{eq4.1}
L(\tfrac{1}{2},\chi_{P})^{2}=\sum_{\substack{f_{1} \ \mathrm{monic} \\ \mathrm{deg}(f_{1})\leq2g}}\frac{\chi_{P}(f_{1})d(f_{1})}{|f_{1}|^{\tfrac{1}{2}}}+\sum_{\substack{f_{2} \ \mathrm{monic} \\ \mathrm{deg}(f_{2})\leq2g-1}}\frac{\chi_{P}(f_{2})d(f_{2})}{|f_{2}|^{\tfrac{1}{2}}},
\end{equation}
where $d(f)$ is the divisor function for polynomials $f\in A$ (see \cite[pg.15]{Ro}).
\end{lem}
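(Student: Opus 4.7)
\textbf{Proof proposal for Lemma \ref{funcional2}.}
The plan is to exploit the fact that $\mathcal{L}(u,\chi_{P})$ is a polynomial of degree $2g$ in $u=q^{-s}$ (as recorded in the preamble to the main theorems), so that $\mathcal{L}(u,\chi_{P})^{2}$ is a polynomial of degree $4g$. I expand this square explicitly, and then fold the upper half of its coefficients down to the lower half via the functional equation \eqref{eq:funceq}. Evaluating at $u=q^{-1/2}$ will produce exactly the two sums on the right of \eqref{eq4.1}.

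First I would record the polynomial expansion
\[
\mathcal{L}(u,\chi_{P}) \;=\; \sum_{n=0}^{2g} A_{n}\,u^{n}, \qquad A_{n}=\sum_{\substack{f\text{ monic}\\ \deg(f)=n}}\chi_{P}(f),
\]
which holds because, for the primitive character $\chi_{P}$ of modulus $P$ of degree $2g+1$, the inner sum vanishes for $n\geq 2g+1$. Squaring and using that $d(f)=\#\{(a,b):a,b\text{ monic},\,ab=f\}$ together with the complete multiplicativity of $\chi_{P}$, I obtain by Cauchy multiplication
\[
\mathcal{L}(u,\chi_{P})^{2}=\sum_{n=0}^{4g}B_{n}\,u^{n},\qquad B_{n}=\sum_{\substack{f\text{ monic}\\ \deg(f)=n}}\chi_{P}(f)\,d(f).
\]

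Next I would apply the functional equation \eqref{eq:funceq} for $L_{C_{P}}=\mathcal{L}(\cdot,\chi_{P})$, squared:
\[
\mathcal{L}(u,\chi_{P})^{2}=(qu^{2})^{2g}\,\mathcal{L}\!\left(\tfrac{1}{qu},\chi_{P}\right)^{2}.
\]
Expanding both sides in powers of $u$ and matching coefficients yields the symmetry
\[
B_{n}=q^{\,n-2g}\,B_{4g-n} \qquad (0\le n\le 4g).
\]
This is the key input: it is the exact (error-free) analogue of the approximate functional equation; the main obstacle is the careful bookkeeping here, since a sign or shift error would spoil the symmetry.

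Finally, setting $u=q^{-1/2}$ I split the sum at $n=2g$:
\[
L(\tfrac{1}{2},\chi_{P})^{2}=\sum_{n=0}^{2g}B_{n}\,q^{-n/2}+\sum_{n=2g+1}^{4g}B_{n}\,q^{-n/2}.
\]
In the second sum I change variable $k=4g-n$ (so $0\le k\le 2g-1$) and apply $B_{n}=q^{n-2g}B_{k}$; the exponent of $q$ collapses to
\[
q^{\,n-2g-n/2}=q^{\,2g-k-(4g-k)/2}=q^{-k/2},
\]
so the second sum becomes $\sum_{k=0}^{2g-1}B_{k}\,q^{-k/2}$. Rewriting each $B_{n}$ as a sum over monic $f$ of degree $n$ gives exactly the two sums $\sum_{\deg f_{1}\le 2g}\chi_{P}(f_{1})d(f_{1})|f_{1}|^{-1/2}$ and $\sum_{\deg f_{2}\le 2g-1}\chi_{P}(f_{2})d(f_{2})|f_{2}|^{-1/2}$ on the right of \eqref{eq4.1}, completing the proof.
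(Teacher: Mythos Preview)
Your proof is correct and follows essentially the same route as the paper: write $\mathcal{L}(u,\chi_{P})^{2}$ as a polynomial $\sum_{n=0}^{4g}B_{n}u^{n}$, use the squared functional equation to obtain the coefficient symmetry $B_{n}=q^{\,n-2g}B_{4g-n}$, identify $B_{n}=\sum_{\deg f=n}\chi_{P}(f)d(f)$, and then evaluate at $u=q^{-1/2}$ with the sum split at $n=2g$. The only cosmetic difference is that the paper first derives the split polynomial identity \eqref{func2} and then evaluates, whereas you evaluate first and then fold the tail via the change of variable $k=4g-n$; the computations are identical.
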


\begin{proof}
We have $L(s,\chi_{P})=L_{C_{P}}(u)$. So
\begin{equation}
L_{C_{P}}(u)^{2}=((qu^{2})^{g})^{2}L_{C_{P}}\left(\frac{1}{qu}\right)^{2}.
\end{equation}
Writing $L_{C_{P}}(u)^{2}=\sum_{n=0}^{4g}a_{n}u^{n}$ we obtain
\begin{eqnarray}
\sum_{n=0}^{4g}a_{n}u^{n}&=&(qu^{2})^{g}(qu^{2})^{g}\sum_{m=0}^{4g}a_{m}q^{-m}u^{-m}\nonumber\\
&=&\sum_{m=0}^{4g}a_{m}q^{2g-m}u^{4g-m}\nonumber\\
&=&\sum_{k=0}^{4g}a_{4g-k}q^{k-2g}u^{k}.
\end{eqnarray}
Equating coefficients we have that $a_{n}=a_{4g-n}q^{n-2g}$ and so we can write
\begin{equation}
\label{func2}
L_{C_{P}}(u)^{2}=\sum_{n=0}^{2g}a_{n}u^{n}+((qu^{2})^{g})^{2}\sum_{m=0}^{2g-1}a_{m}q^{-m}u^{-m}.
\end{equation}
From $L(s,\chi_{P})^{2}$ we see that the coefficients $a_{n}$ are given by
\begin{equation}
a_{n}=\sum_{\substack{f \ \mathrm{monic} \\ \mathrm{deg}(f)=n}}\chi_{P}(f)d(f),
\end{equation}
where 
\begin{equation}
d(f)=\sum_{\substack{h_{1}h_{2}=f \\ h_{1},h_{2} \ \mathrm{monic}}}1.
\end{equation}
Therefore writing $s=1/2$, i.e. $u=q^{-1/2}$, in \eqref{func2} proves the lemma.
\end{proof}

Our next lemma is quoted from Rosen \cite[Proposition 2.5]{Ro}

\begin{lem}
\label{divisorbound}
\begin{equation}
\sum_{\substack{f \ \mathrm{monic} \\ \mathrm{deg}(f)=n}}d(f)\ll q^{n}n.
\end{equation}
\end{lem}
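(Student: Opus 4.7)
The plan is to evaluate the sum exactly by interpreting $d(f)$ as a convolution and swapping the order of summation. Starting from the definition
\begin{equation}
d(f) = \sum_{\substack{h_{1}h_{2}=f \\ h_{1},h_{2}\ \mathrm{monic}}}1,
\end{equation}
I would write
\begin{equation}
\sum_{\substack{f\ \mathrm{monic} \\ \mathrm{deg}(f)=n}} d(f) = \sum_{\substack{f\ \mathrm{monic} \\ \mathrm{deg}(f)=n}} \sum_{\substack{h_{1}h_{2}=f \\ h_{1},h_{2}\ \mathrm{monic}}} 1,
\end{equation}
and then interchange the two summations so that the outer sum ranges over ordered pairs of monic polynomials $(h_1,h_2)$ with $\deg(h_1)+\deg(h_2)=n$.

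After the interchange, I would decompose by the degree of $h_1$. Writing $a=\deg(h_1)$ and $b=\deg(h_2)=n-a$, and using the standard fact that the number of monic polynomials in $A=\mathbb{F}_q[T]$ of a given degree $k$ is exactly $q^k$, the sum becomes
\begin{equation}
\sum_{a=0}^{n} q^{a}\cdot q^{n-a} = (n+1)\, q^{n}.
\end{equation}
This is an exact identity, which immediately implies $\sum_{\deg f = n} d(f) \ll q^{n} n$, as claimed.

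Since the identity is exact, there is really no genuine obstacle here; the lemma is purely combinatorial and relies only on the counting formula for monic polynomials of fixed degree. The only small care needed is to make sure the bookkeeping matches the definition of $d(f)$ quoted from \cite[pg.~15]{Ro} (i.e.\ an ordered factorization by monic divisors rather than an unordered one), but this is exactly how $d$ is defined in the polynomial setting, so no adjustment of constants is needed.
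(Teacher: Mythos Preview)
Your argument is correct: the exact identity $\sum_{\deg f=n}d(f)=(n+1)q^{n}$ follows immediately from the convolution interpretation and the count of monic polynomials of fixed degree, and this gives the stated bound. The paper does not actually supply its own proof of this lemma; it simply quotes the result from Rosen \cite[Proposition~2.5]{Ro}. So your self-contained combinatorial computation is not so much a different route as a fill-in for a citation, and in fact it reproduces exactly the standard proof one finds in Rosen.
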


The next lemma is a minor modification of Theorem 17.4 in \cite{Ro}

\begin{lem}
\label{lem4.3}
Let $f:A^{+}\rightarrow\mathbb{C}$ and let $\zeta_{f}(s)$ be the corresponding Dirichlet series. Suppose this series converges absolutely in the region $\mathfrak{R}(s)>1$ and is holomorphic in the region $\{s\in B:\mathfrak{R}(s)=1\}$ except for a simple pole of order $r$ at $s=1$, where $A^{+}$ denotes the set of monic polynomials in $\mathbb{F}_{q}[T]$ and 
$$B=\left\{s\in\mathbb{C}:-\frac{\pi i}{\log(q)}\leq\mathcal{I}(s)\leq\frac{\pi i}{\log(q)}\right\}.$$
Let $\alpha=\lim_{s\rightarrow1}(s-1)^{r}\zeta_{f}(s)$. Then, there is a $\delta<1$ and constants $c_{-i}$ with $1\leq i\leq r$ such that
\begin{equation}
\sum_{\substack{\mathrm{deg}(D)=n}}f(D)=q^{n}\left(\sum_{i=1}^{r}c_{-i}\binom{n+i-1}{i-1}(-q)^{i}\right)+O(q^{\delta n}).
\end{equation}
And the sum in parenthesis is a polynomial in $n$ of degree $r-1$ with leading term
$$\frac{\log(q)^{r}}{(r-1)!}\alpha n^{r-1}.$$
\end{lem}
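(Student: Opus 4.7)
The plan is to convert the Dirichlet series into a power series in $u = q^{-s}$ and then extract the coefficient of $u^n$ by means of partial fractions. Setting $A_n := \sum_{\deg(D) = n} f(D)$, absolute convergence of $\zeta_f(s)$ for $\mathfrak{R}(s) > 1$ translates into the statement that $F(u) := \sum_{n \geq 0} A_n u^n$ has radius of convergence at least $1/q$. Because $q^{-s}$ is periodic in $s$ with period $2\pi i/\log q$, the strip $B$ represents one full fundamental period of this substitution. Consequently, the hypothesis that $\zeta_f$ is holomorphic on a neighbourhood of the line $\mathfrak{R}(s) = 1$ inside $B$ except for a pole of order $r$ at $s = 1$ can be upgraded to the statement that $F(u)$ extends to a meromorphic function on some open disk $|u| < q^{-\delta}$ with $\delta < 1$, whose only singularity is a pole of order $r$ at $u = 1/q$.

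Granted this extension, I would perform the partial fraction decomposition
$$F(u) = \sum_{i=1}^{r} \frac{a_i}{(1 - qu)^i} + h(u),$$
where $h$ is holomorphic on a closed disk $|u| \leq q^{-\delta'}$ with $\delta < \delta' < 1$. Expanding via the generalized binomial series
$$\frac{1}{(1 - qu)^i} = \sum_{n \geq 0} \binom{n + i - 1}{i - 1} q^n u^n$$
and estimating the Taylor coefficients of $h$ by Cauchy's inequality as $O(q^{\delta' n})$, extracting $[u^n] F(u)$ gives
$$A_n = q^n \sum_{i=1}^{r} a_i \binom{n + i - 1}{i - 1} + O(q^{\delta n}).$$
The statement of the lemma is then recovered by setting $c_{-i} := a_i/(-q)^i$, which merely absorbs the sign convention encoded in the factor $(-q)^i$ in the claim.

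For the leading-term assertion, I would expand $1 - qu = 1 - q^{1-s} = (s-1)\log q + O((s-1)^2)$ near $s = 1$, so that
$$a_r = \lim_{u \to 1/q}(1 - qu)^r F(u) = (\log q)^r \lim_{s \to 1}(s-1)^r \zeta_f(s) = (\log q)^r \alpha.$$
The polynomial $\sum_{i=1}^{r} c_{-i}\binom{n+i-1}{i-1}(-q)^i = \sum_{i=1}^{r} a_i \binom{n+i-1}{i-1}$ has degree $r-1$ in $n$, and its leading coefficient is dictated by the $i = r$ summand, giving $a_r/(r-1)! = (\log q)^r \alpha/(r-1)!$, which matches the claimed leading term.

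The only step that is not pure bookkeeping is upgrading "holomorphic on $\mathfrak{R}(s) = 1$ in $B$ away from $s = 1$" to "meromorphic on an open disk strictly larger than $|u| < 1/q$". This is the main obstacle, and I would handle it by periodicity together with a standard compactness argument: identify the closed segment of $\mathfrak{R}(s) = 1$ in $B$ with the compact circle $|u| = 1/q$ via $u = q^{-s}$; at each of its points $\zeta_f$ extends holomorphically to a neighbourhood (with a pole of order $r$ allowed only at $u = 1/q$); a finite subcover then produces a single $\delta < 1$ that works uniformly on the whole circle. Once that uniform meromorphic extension is in place, the partial fraction decomposition and the binomial expansion above complete the proof.
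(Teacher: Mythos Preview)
The paper does not actually prove this lemma; it merely quotes it as ``a minor modification of Theorem 17.4 in \cite{Ro}'' and moves on. Your argument is the standard one (and essentially what Rosen does): pass to the variable $u=q^{-s}$, use the compactness of the circle $|u|=1/q$ to get a uniform $\delta<1$, strip off the principal part at $u=1/q$ via partial fractions in powers of $(1-qu)^{-1}$, expand each term with the negative binomial series, and bound the remaining holomorphic piece by Cauchy's inequality. Your identification $c_{-i}=a_i/(-q)^i$ and your computation of the leading coefficient via $1-qu=(s-1)\log q+O((s-1)^2)$ are both correct. In short, your proposal is sound and fills in exactly the argument the paper chose to outsource to Rosen.
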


\begin{lem}
Let $f$ be a monic polynomial in $A=\mathbb{F}_{q}[T]$. Then
\begin{equation}
\sum_{\substack{f \ \mathrm{monic} \\ \mathrm{deg}(f)=n}}d(f^{2})=\frac{1}{2}\frac{1}{\zeta_{A}(2)}q^{n}n^{2}+O(q^{n}n).
\end{equation}
\end{lem}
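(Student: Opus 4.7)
The plan is to apply Lemma \ref{lem4.3} to the multiplicative arithmetic function $D \mapsto d(D^{2})$. First I would compute its generating Dirichlet series. Since $d(P^{2k}) = 2k+1$ at a prime polynomial $P$, the local Euler factor is
$$\sum_{k=0}^{\infty}(2k+1)|P|^{-ks} = \frac{1+|P|^{-s}}{(1-|P|^{-s})^{2}} = \frac{1-|P|^{-2s}}{(1-|P|^{-s})^{3}},$$
and taking the product over all monic irreducible $P$ yields the clean identity
$$\zeta_{f}(s) := \sum_{\substack{D \text{ monic}}}\frac{d(D^{2})}{|D|^{s}} = \frac{\zeta_{A}(s)^{3}}{\zeta_{A}(2s)}.$$

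Next I would verify the analytic hypotheses of Lemma \ref{lem4.3}. The factor $\zeta_{A}(2s) = (1-q^{1-2s})^{-1}$ has its poles on the line $\mathfrak{R}(s) = 1/2$, so $1/\zeta_{A}(2s)$ is entire and takes the value $1-q^{-1} = 1/\zeta_{A}(2)$ at $s=1$; meanwhile $\zeta_{A}(s)^{3}$ has its only singularity inside the strip $B$ at $s=1$, where it has a pole of order $3$. Consequently $\zeta_{f}$ is holomorphic on $\{\mathfrak{R}(s)=1\}\cap B$ apart from a pole of order $r=3$ at $s=1$, with
$$\alpha := \lim_{s\to 1}(s-1)^{3}\zeta_{f}(s) = \frac{1}{\zeta_{A}(2)}\left(\lim_{s\to 1}\frac{s-1}{1-q^{1-s}}\right)^{3} = \frac{1}{\zeta_{A}(2)(\log q)^{3}}.$$

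Finally, Lemma \ref{lem4.3} with $r=3$ produces a main term that is a polynomial in $n$ of degree $r-1 = 2$ with leading coefficient $\tfrac{(\log q)^{3}}{2!}\alpha = \tfrac{1}{2\zeta_{A}(2)}$, plus an error $O(q^{\delta n})$ for some $\delta<1$. Absorbing the lower-degree $O(n)$ terms of that polynomial into the announced error $O(q^{n}n)$ then yields precisely the claimed estimate. The only step needing any genuine care is the verification that the line $\mathfrak{R}(s)=1$ carries no singularity of $\zeta_{f}$ other than at $s=1$, which is essentially immediate from the factorisation above; everything else is routine bookkeeping.
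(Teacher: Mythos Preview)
Your proof is correct and follows essentially the same route as the paper: compute the Euler product for the generating series of $d(f^{2})$, identify it as $\zeta_{A}(s)^{3}/\zeta_{A}(2s)$, and feed the resulting triple pole at $s=1$ into Lemma~\ref{lem4.3}. Your presentation of the local factor via the identity $\sum_{k\ge0}(2k+1)x^{k}=(1-x^{2})/(1-x)^{3}$ is a bit cleaner than the paper's intermediate algebra, and your value $\alpha=1/(\zeta_{A}(2)(\log q)^{3})$ agrees with the paper's $\alpha=(q-1)/(q(\log q)^{3})$ since $1/\zeta_{A}(2)=1-q^{-1}$.
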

\begin{proof}
We consider the Dirichlet series associated to $d(f^{2})$
\begin{eqnarray}
\zeta_{f}(s)=\sum_{f \ \mathrm{monic}}\frac{d(f^{2})}{|f|^{s}}&=&\prod_{\substack{P \ \mathrm{monic} \\ \mathrm{irreducible}}}\left(1+\frac{d(P^{2})}{|P|^{s}}+\frac{d(P^{4})}{|P|^{2s}}+\cdots\right)\nonumber\\
&=&\prod_{\substack{P \ \mathrm{monic} \\ \mathrm{irreducible}}}\left(1+\left(\frac{-3}{|P|^{s}(|P|^{s}-1)^{2}}+\frac{1}{(|P|^{s}-1)^{2}}\right)\right)\nonumber\\
&=&\frac{\zeta_{A}(s)^{3}}{\zeta_{A}(2s)}.\nonumber
\end{eqnarray}
From \eqref{eq:zetaA} the sum converges absolutely for $\mathfrak{R}(s)>1$, is holomorphic on the disc $\{u=q^{-s}\in\mathbb{C}:|u|\leq q^{-\delta}\}$ for some $\delta<1$, and $\zeta_{f}(s)$ has a pole of order $3$ at $s=1$. We now apply Lemma \ref{lem4.3} to obtain
\begin{equation}
\sum_{\substack{f \ \mathrm{monic} \\ \mathrm{deg}(f)=n}}d(f^{2})=\frac{(\log q)^{3}}{2}\alpha q^{n}n^{2}+O(q^{n}n),
\end{equation}
where
\begin{equation}
\alpha=\lim_{s\rightarrow1}(s-1)^{3}\frac{\zeta_{A}(s)^{3}}{\zeta_{A}(2s)}=\frac{q-1}{q(\log q)^{3}}.
\end{equation}
\end{proof}

\subsection{Preparation for the Proof}

From Lemma \ref{funcional2}, $L(\tfrac{1}{2},\chi_{P})^{2}$ can be written as two similar sums. Our main aim in this section is to average, over the prime polynomials, the first sum in the right-hand side of \eqref{eq4.1}. We start by writing
\begin{multline}
\sum_{\substack{f_{1} \ \mathrm{monic} \\ \mathrm{deg}(f_{1})\leq2g}}\frac{\chi_{P}(f_{1})d(f_{1})}{|f_{1}|^{\tfrac{1}{2}}}\\
=\sum_{\substack{f_{1} \ \mathrm{monic} \\ \mathrm{deg}(f_{1})\leq2g \\ f_{1}=\square}}\frac{\chi_{P}(f_{1})d(f_{1})}{|f_{1}|^{\tfrac{1}{2}}}+\sum_{\substack{f_{1} \ \mathrm{monic} \\ \mathrm{deg}(f_{1})\leq2g \\ f_{1}\neq\square}}\frac{\chi_{P}(f_{1})d(f_{1})}{|f_{1}|^{\tfrac{1}{2}}}.
\end{multline}

\subsection{The Main Term} The following proposition is established in this section.

\begin{prop}
\label{prop4.5}
\begin{multline}
\sum_{\substack{P \ \mathrm{monic} \\ \mathrm{irrducible} \\ \mathrm{deg}(P)=2g+1}}\sum_{\substack{f_{1} \ \mathrm{monic} \\ \mathrm{deg}(f_{1})\leq2g \\ f_{1}=\square}}\frac{\chi_{P}(f_{1})d(f_{1})}{|f_{1}|^{\tfrac{1}{2}}}\\
=\frac{1}{12}\frac{1}{\zeta_{A}(2)}\frac{|P|}{\log_{q}|P|}g(g+1)(2g+1)+O\left(\frac{|P|}{\log_{q}|P|}g^{2}\right).\nonumber
\end{multline}
\end{prop}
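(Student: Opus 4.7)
The plan is to reduce the sum over squares to a sum over monic polynomials $l$ by writing $f_1 = l^2$, interchange the order of summation to put the $P$-sum on the inside, observe that coprimality is free, apply the Prime Polynomial Theorem, and then invoke the divisor-square lemma immediately preceding the proposition.

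Concretely, writing $f_1 = l^2$ with $l$ monic of degree $n = \deg(f_1)/2$, the constraint $\deg(f_1) \leq 2g$ becomes $\deg(l) \leq g$, while $|f_1|^{1/2} = |l|$ and $\chi_P(l^2) = 1$ whenever $(l,P)=1$ and $0$ otherwise. Since $\deg(P) = 2g+1 > g \geq \deg(l)$, the coprimality condition $(l,P)=1$ is automatic. Thus the left-hand side equals
\begin{equation*}
\pi_A(2g+1) \sum_{\substack{l\ \mathrm{monic} \\ \deg(l)\leq g}} \frac{d(l^2)}{|l|}
= \pi_A(2g+1) \sum_{n=0}^{g} q^{-n} \sum_{\substack{l\ \mathrm{monic} \\ \deg(l)=n}} d(l^2).
\end{equation*}

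Next I would apply the lemma on sums of $d(l^2)$ over monic polynomials of fixed degree, which yields
\begin{equation*}
\sum_{\substack{l\ \mathrm{monic} \\ \deg(l)=n}} d(l^2) = \frac{1}{2\zeta_A(2)}\, q^n n^2 + O(q^n n).
\end{equation*}
The factor $q^{-n}$ cancels $q^n$ cleanly, so that
\begin{equation*}
\sum_{\substack{l\ \mathrm{monic} \\ \deg(l)\leq g}} \frac{d(l^2)}{|l|} = \frac{1}{2\zeta_A(2)} \sum_{n=0}^{g} n^2 + O\!\left(\sum_{n=0}^{g} n\right) = \frac{g(g+1)(2g+1)}{12\,\zeta_A(2)} + O(g^2),
\end{equation*}
using the standard closed form for $\sum_{n=0}^g n^2$.

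Finally I would multiply by $\pi_A(2g+1) = \frac{|P|}{\log_q|P|} + O\!\left(\frac{\sqrt{|P|}}{\log_q|P|}\right)$ (recalling $\log_q|P| = 2g+1$), and check that the cross terms are absorbed into the claimed error $O(|P|g^2/\log_q|P|)$: the product of the main factor with the $O(g^2)$ gives the stated error, while the $\sqrt{|P|}$ correction to $\pi_A$ multiplied by the $O(g^3)$ main term is much smaller. There is no serious obstacle here beyond bookkeeping; the only spot where one has to be a little careful is verifying that the coprimality condition in the swapped sum is indeed vacuous (so that one really gets the full $\pi_A(2g+1)$ and not a reduced count) and that the constant $\tfrac12$ from the divisor-square lemma combines with $\tfrac16$ from $\sum n^2$ to produce the announced $\tfrac{1}{12}$.
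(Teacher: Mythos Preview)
Your proposal is correct and follows essentially the same route as the paper: substitute $f_1=l^2$, note that $(l,P)=1$ is automatic since $\deg l\le g<2g+1=\deg P$, apply the Prime Polynomial Theorem to the inner $P$-sum, use the divisor-square lemma to evaluate $\sum_{\deg l=n}d(l^2)$, and then sum $\sum_{n\le g}n^2$. The only cosmetic difference is that you factor out $\pi_A(2g+1)$ before applying the divisor lemma, whereas the paper carries the main and error terms of the Prime Polynomial Theorem through in parallel; the bookkeeping is the same.
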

\begin{proof}
We have
\begin{multline}
\sum_{\substack{P \ \mathrm{monic} \\ \mathrm{irrducible} \\ \mathrm{deg}(P)=2g+1}}\sum_{\substack{f_{1} \ \mathrm{monic} \\ \mathrm{deg}(f_{1})\leq2g \\ f_{1}=\square}}\frac{\chi_{P}(f_{1})d(f_{1})}{|f_{1}|^{\tfrac{1}{2}}}\nonumber\\
\ \ \ \ \ \ \ \ \ \ \ \ \ \ \ \ \ \ \ \ \ \ \ \ \ \ \ \ \ \ \ \ \ \ \ \ \ \ \ \ \ \ \ =\sum_{n=0}^{2g}q^{-\tfrac{n}{2}}\sum_{\substack{f_{1}=\square \\ \mathrm{deg}(f_{1})=n}}d(f_{1})\sum_{\substack{P \ \mathrm{monic} \\ \mathrm{irrducible} \\ \mathrm{deg}(P)=2g+1}}\chi_{P}(f_{1})\ \ \ \ \ \ \ \ \ \ \ \ \ \ \ \ \ \ \ \ \ \ \ \ \ \ \ \ \ \ \ \ \ \ \ \ \ \ \ \ \ \ \ \ \ \ \ \ \ \ \ \ \\
\ \ \ \ \ \ \ \ \ \ \ \ \ \ \ \ \ \ \ \ \ \ \ \ \ \ \ \ \ \ \ \ \ \ \ \ \ \ \ \ \ \ \ =\sum_{m=0}^{g}q^{-m}\sum_{\substack{l \ \mathrm{monic} \\ \mathrm{deg}(l)=m}}d(l^{2})\sum_{\substack{P \ \mathrm{monic} \\ \mathrm{irrducible} \\ \mathrm{deg}(P)=2g+1}}1.\ \ \ \ \ \ \ \ \ \ \ \ \ \ \ \ \ \ \ \ \ \ \ \ \ \ \ \ \ \ \ \ \ \ \ \ \ \ \ \ \ \ \ \ \ \ \ \ \ \ \ \ 
\end{multline}
We again make use of the Prime Polynomial Theorem \ref{thm:pnt} to obtain
\begin{multline}
\sum_{\substack{P \ \mathrm{monic} \\ \mathrm{irrducible} \\ \mathrm{deg}(P)=2g+1}}\sum_{\substack{f_{1} \ \mathrm{monic} \\ \mathrm{deg}(f_{1})\leq2g \\ f_{1}=\square}}\frac{\chi_{P}(f_{1})d(f_{1})}{|f_{1}|^{\tfrac{1}{2}}}\nonumber\\
=\frac{|P|}{\log_{q}|P|}\sum_{m=0}^{g}q^{-m}\sum_{\substack{l \ \mathrm{monic} \\ \mathrm{deg}(l)=m}}d(l^{2})+O\left(\frac{\sqrt{|P|}}{\log_{q}|P|}\sum_{m=0}^{g}q^{-m}\sum_{\substack{l \ \mathrm{monic} \\ \mathrm{deg}(l)=m}}d(l^{2})\right)
\end{multline}
Invoking lemma \ref{lem4.3} we obtain the following equation
\begin{multline}
\sum_{\substack{P \ \mathrm{monic} \\ \mathrm{irrducible} \\ \mathrm{deg}(P)=2g+1}}\sum_{\substack{f_{1} \ \mathrm{monic} \\ \mathrm{deg}(f_{1})\leq2g \\ f_{1}=\square}}\frac{\chi_{P}(f_{1})d(f_{1})}{|f_{1}|^{\tfrac{1}{2}}}\nonumber\\
=\frac{|P|}{\log_{q}|P|}\frac{1}{2}\frac{1}{\zeta_{A}(2)}\sum_{m=0}^{g}m^{2}+O\left(\frac{|P|}{\log_{q}|P|}\sum_{m=0}^{g}m\right)+O\left(\frac{\sqrt{|P|}}{\log_{q}|P|}\sum_{m=0}^{g}m^{2}\right)\\ \\
=\frac{|P|}{\log_{q}|P|}\frac{1}{12}\frac{1}{\zeta_{A}(2)}g(g+1)(2g+1)+O\left(\frac{|P|}{\log_{q}|P|}g^{2}\right).
\end{multline}
\end{proof}

In a similar way we can prove that

\begin{prop}
\label{prop4.6}
\begin{multline}
\sum_{\substack{P \ \mathrm{monic} \\ \mathrm{irrducible} \\ \mathrm{deg}(P)=2g+1}}\sum_{\substack{f_{2} \ \mathrm{monic} \\ \mathrm{deg}(f_{2})\leq2g-1 \\ f_{2}=\square}}\frac{\chi_{P}(f_{2})d(f_{2})}{|f_{2}|^{\tfrac{1}{2}}}\nonumber\\ \\
=\frac{1}{12}\frac{1}{\zeta_{A}(2)}\frac{|P|}{\log_{q}|P|}\left[\frac{2g-1}{2}\right]\left(1+\left[\frac{2g-1}{2}\right]\right)\left(1+2\left[\frac{2g-1}{2}\right]\right)\\ \\
+O\left(\frac{|P|}{\log_{q}|P|}g^{2}\right).\ \ \ \ \ \ \ \ \ \ \ \ \ \ \ \ \ \ \ \ \ \ \ \ \ \ \ \ \ \ \ \ \ \ \ \ \ \ \ \ \ \ \ \ \ \ \ \ \ \ \ \ \ \ 
\end{multline}
\end{prop}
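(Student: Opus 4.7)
The plan is to mimic the proof of Proposition \ref{prop4.5} almost verbatim, changing only the range of summation. First I would restrict to squares in the inner sum: write $f_{2}=l^{2}$ where $l$ is monic, so the constraint $\deg(f_{2})\leq 2g-1$ becomes $\deg(l)\leq \left[\frac{2g-1}{2}\right]=g-1$. Since $\chi_{P}(l^{2})=\left(\frac{P}{l^{2}}\right)=\left(\frac{P}{l}\right)^{2}$ equals $1$ when $(P,l)=1$ and $0$ otherwise, and since $\deg(P)=2g+1>\deg(l)$ rules out $P\mid l$, we get $\chi_{P}(l^{2})=1$ automatically for every prime $P$ of degree $2g+1$.

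Next I would swap the order of summation and pull out the inner sum over $P$, which is simply $\pi_{A}(2g+1)$. By the Prime Polynomial Theorem \ref{thm:pnt} this equals $\frac{q^{2g+1}}{2g+1}+O\!\left(\frac{q^{g}}{2g+1}\right)=\frac{|P|}{\log_{q}|P|}+O\!\left(\frac{\sqrt{|P|}}{\log_{q}|P|}\right)$. Combining this yields
\begin{equation}
\sum_{\substack{P \ \mathrm{monic} \\ \mathrm{irreducible} \\ \mathrm{deg}(P)=2g+1}}\sum_{\substack{f_{2} \ \mathrm{monic} \\ \mathrm{deg}(f_{2})\leq 2g-1 \\ f_{2}=\square}}\frac{\chi_{P}(f_{2})d(f_{2})}{|f_{2}|^{1/2}}
=\frac{|P|}{\log_{q}|P|}\sum_{m=0}^{g-1}q^{-m}\!\!\sum_{\substack{l\ \mathrm{monic} \\ \deg(l)=m}}\!\!d(l^{2})+O\!\left(\frac{\sqrt{|P|}}{\log_{q}|P|}\sum_{m=0}^{g-1}q^{-m}\!\!\sum_{\substack{l\ \mathrm{monic} \\ \deg(l)=m}}\!\!d(l^{2})\right).\nonumber
\end{equation}

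Now I would invoke the lemma computing $\sum_{\deg(l)=m}d(l^{2})=\frac{1}{2\zeta_{A}(2)}q^{m}m^{2}+O(q^{m}m)$. Substituting gives a main term of $\frac{|P|}{\log_{q}|P|}\cdot\frac{1}{2\zeta_{A}(2)}\sum_{m=0}^{g-1}m^{2}$ together with two error terms of sizes $O\!\left(\frac{|P|}{\log_{q}|P|}\sum_{m=0}^{g-1}m\right)$ and $O\!\left(\frac{\sqrt{|P|}}{\log_{q}|P|}\sum_{m=0}^{g-1}m^{2}\right)$, both absorbed into $O\!\left(\frac{|P|}{\log_{q}|P|}g^{2}\right)$. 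Using the elementary identity $\sum_{m=0}^{M}m^{2}=\frac{M(M+1)(2M+1)}{6}$ with $M=g-1=\left[\frac{2g-1}{2}\right]$ produces
\begin{equation}
\frac{1}{12}\frac{1}{\zeta_{A}(2)}\frac{|P|}{\log_{q}|P|}\left[\frac{2g-1}{2}\right]\!\left(1+\left[\frac{2g-1}{2}\right]\right)\!\left(1+2\left[\frac{2g-1}{2}\right]\right),\nonumber
\end{equation}
which is precisely the claimed expression.

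There is no genuine obstacle: every ingredient (the square restriction, the coprimality argument from $\deg(P)>\deg(l)$, the Prime Polynomial Theorem, the divisor-squared sum lemma, and the closed form for $\sum m^{2}$) has already been used in Proposition \ref{prop4.5}. The only thing to watch is the correct bookkeeping of the upper limit: the shift from $\deg(f_{1})\leq 2g$ to $\deg(f_{2})\leq 2g-1$ changes the cap on $m$ from $g$ to $g-1$, and it is this single change that accounts for the appearance of $\left[\frac{2g-1}{2}\right]$ rather than $g$ in the final answer.
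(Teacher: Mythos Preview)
Your proposal is correct and follows exactly the approach the paper intends: the paper itself gives no separate proof of Proposition~\ref{prop4.6}, stating only ``In a similar way we can prove that'', which is precisely the verbatim adaptation of the argument for Proposition~\ref{prop4.5} that you carry out. Your bookkeeping of the upper limit $\left[\tfrac{2g-1}{2}\right]=g-1$ is the only point of difference, as you correctly note.
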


\subsection{Contributions of non--squares}

The main result in this section is given by the following proposition.

\begin{prop}
\label{prop4.7}
We have that,
\begin{equation}
\sum_{\substack{P \ \mathrm{monic} \\ \mathrm{irrducible} \\ \mathrm{deg}(P)=2g+1}}\sum_{\substack{f_{1} \ \mathrm{monic} \\ \mathrm{deg}(f_{1})\leq2g \\ f_{1}\neq\square}}\frac{\chi_{P}(f_{1})d(f_{1})}{|f_{1}|^{\tfrac{1}{2}}}=O\left(|P|g\right).
\end{equation}
\end{prop}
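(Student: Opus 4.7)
The proof should follow the same template as Proposition \ref{prop3.3}, but with the extra factor $d(f_1)$ from the divisor function and a longer range of summation $n \le 2g$ instead of $n \le g$.

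First I would interchange the order of summation to write
\begin{equation}
\sum_{\substack{P \ \mathrm{monic} \\ \mathrm{irreducible} \\ \mathrm{deg}(P)=2g+1}}\sum_{\substack{f_{1} \ \mathrm{monic} \\ \mathrm{deg}(f_{1})\leq2g \\ f_{1}\neq\square}}\frac{\chi_{P}(f_{1})d(f_{1})}{|f_{1}|^{1/2}} = \sum_{n=0}^{2g}q^{-n/2}\sum_{\substack{f_1 \ \mathrm{monic} \\ \mathrm{deg}(f_1)=n \\ f_1\neq \square}}d(f_1)\sum_{\substack{P \ \mathrm{monic} \\ \mathrm{irreducible} \\ \mathrm{deg}(P)=2g+1}}\chi_P(f_1). \nonumber
\end{equation}
Since $\mathrm{deg}(P)=2g+1>\mathrm{deg}(f_1)$, we always have $(P,f_1)=1$, so quadratic reciprocity (Theorem \ref{reciprocity}) applies and gives $\chi_P(f_1)=(P/f_1)=\varepsilon_n (f_1/P)$ where $\varepsilon_n=(-1)^{((q-1)/2)(2g+1)n}$ is a sign depending only on $n$ (not on $P$). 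Thus we may replace the inner character sum with $\sum_P (f_1/P)$ up to an overall sign and take absolute values.

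Next I would invoke Proposition \ref{bound}, which since $f_1$ is a non-square gives
\begin{equation}
\Bigl|\sum_{\substack{P \ \mathrm{monic} \\ \mathrm{irreducible} \\ \mathrm{deg}(P)=2g+1}}\left(\frac{f_1}{P}\right)\Bigr| \ll \frac{n}{2g+1}\,q^{(2g+1)/2}, \nonumber
\end{equation}
and the divisor bound of Lemma \ref{divisorbound}, namely $\sum_{\mathrm{deg}(f_1)=n}d(f_1)\ll n\,q^n$. Combining these, the triple sum is bounded by
\begin{equation}
\ll \frac{q^{(2g+1)/2}}{2g+1}\sum_{n=0}^{2g} q^{-n/2}\cdot nq^n \cdot n = \frac{q^{(2g+1)/2}}{2g+1}\sum_{n=0}^{2g} n^2 q^{n/2}. \nonumber
\end{equation}

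Finally I would estimate the geometric-type sum $\sum_{n=0}^{2g} n^2 q^{n/2}\ll g^2 q^g$ (dominated by the largest term). This produces an overall bound of order $\frac{g^2}{2g+1} q^{(2g+1)/2 + g} = O\!\left(\frac{g}{\sqrt{q}}\,|P|\right)=O(|P|g)$, as claimed. I do not expect any real obstacle here; the argument is a direct quantitative adaptation of Proposition \ref{prop3.3}, with the only subtlety being to verify that the extra factor $d(f_1)$ and the lengthened range $n\le 2g$ (rather than $n\le g$) still leave enough cancellation in Proposition \ref{bound} to beat the trivial bound $|P|g$; the computation above confirms that they do with room to spare.
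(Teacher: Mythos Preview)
Your proof is correct and follows essentially the same approach as the paper: interchange the order of summation, use quadratic reciprocity to pass from $(P/f_1)$ to $(f_1/P)$, apply Proposition~\ref{bound} to the inner character sum and Lemma~\ref{divisorbound} to the divisor sum, and then estimate $\sum_{n\le 2g} n^2 q^{n/2}\ll g^2 q^g$. The paper's proof is the same line-by-line, only it suppresses the reciprocity step (having already made it explicit in Proposition~\ref{prop3.3}).
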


\begin{proof}
\begin{eqnarray}
\sum_{\substack{P \ \mathrm{monic} \\ \mathrm{irrducible} \\ \mathrm{deg}(P)=2g+1}}\sum_{\substack{f_{1} \ \mathrm{monic} \\ \mathrm{deg}(f_{1})\leq2g \\ f_{1}\neq\square}}\frac{\chi_{P}(f_{1})d(f_{1})}{|f_{1}|^{\tfrac{1}{2}}}&\ll&\sum_{\substack{f_{1} \ \mathrm{monic} \\ \mathrm{deg}(f_{1})\leq2g \\ f_{1}\neq\square}}\frac{d(f_{1})}{|f_{1}|^{\tfrac{1}{2}}}\left|\sum_{\substack{P \ \mathrm{monic} \\ \mathrm{irrducible} \\ \mathrm{deg}(P)=2g+1}}\left(\frac{f_{1}}{P}\right)\right|\nonumber\\
\label{4.14}&\ll&\frac{\sqrt{|P|}}{2g+1}\sum_{n=0}^{2g}\frac{n}{q^{n/2}}\sum_{\substack{f_{1} \ \mathrm{monic} \\ \mathrm{deg}(f_{1})=n}}d(f_{1})\\
&\ll&\frac{\sqrt{|P|}}{2g+1}\sum_{n=0}^{2g}n^{2}q^{n/2}\nonumber\\
&\ll&|P|g,\nonumber
\end{eqnarray}
where we have used Proposition \ref{bound} in the first line and Lemma \ref{divisorbound} in \eqref{4.14}.
\end{proof}

Similarly we have

\begin{prop}
\label{prop4.8}
\begin{equation}
\sum_{\substack{P \ \mathrm{monic} \\ \mathrm{irrducible} \\ \mathrm{deg}(P)=2g+1}}\sum_{\substack{f_{2} \ \mathrm{monic} \\ \mathrm{deg}(f_{2})\leq2g-1 \\ f_{2}\neq\square}}\frac{\chi_{P}(f_{2})d(f_{2})}{|f_{2}|^{\tfrac{1}{2}}}=O\left(|P|g\right).
\end{equation}
\end{prop}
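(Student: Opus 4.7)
The proof of Proposition \ref{prop4.8} will follow \emph{mutatis mutandis} the argument already carried out for Proposition \ref{prop4.7}, since the inner sum over $f_{2}$ ranges over a proper subset of the $f_{1}$ range (namely, degrees up to $2g-1$ rather than $2g$) and the structure of the sum is otherwise identical. The plan is therefore to mimic that proof line by line and verify that the arithmetic still yields $O(|P|g)$.

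First I would swap the order of summation, placing the sum over monic irreducible $P$ of degree $2g+1$ as the innermost sum, so that the object to bound becomes
\begin{equation*}
\sum_{m=0}^{2g-1}q^{-m/2}\sum_{\substack{f_{2}\ \mathrm{monic}\\ \mathrm{deg}(f_{2})=m\\ f_{2}\neq\square}} d(f_{2})\sum_{\substack{P\ \mathrm{monic}\\ \mathrm{irreducible}\\ \mathrm{deg}(P)=2g+1}}\chi_{P}(f_{2}).
\end{equation*}
Next, apply quadratic reciprocity (Theorem \ref{reciprocity}) to rewrite $\chi_{P}(f_{2})=(P/f_{2})$ as $\pm(f_{2}/P)$, with a sign that depends only on $q$, on $2g+1$, and on $\deg(f_{2})$, hence is independent of $P$. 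Taking absolute values reduces the problem to bounding $|\sum_{P}(f_{2}/P)|$ for each fixed non-square $f_{2}$.

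For this inner sum I would apply Proposition \ref{bound} with $n=2g+1$, giving
\begin{equation*}
\Bigl|\sum_{\substack{P\ \mathrm{monic}\\ \mathrm{irreducible}\\ \mathrm{deg}(P)=2g+1}}\bigl(\tfrac{f_{2}}{P}\bigr)\Bigr|\ll \frac{\deg(f_{2})}{2g+1}\,q^{(2g+1)/2}=\frac{m\sqrt{|P|}}{2g+1},
\end{equation*}
and then use Lemma \ref{divisorbound} to estimate $\sum_{\deg(f_{2})=m}d(f_{2})\ll q^{m}m$. Substituting these back gives
\begin{equation*}
\ll \frac{\sqrt{|P|}}{2g+1}\sum_{m=0}^{2g-1} m^{2}\,q^{m/2},
\end{equation*}
and the exponential nature of the $m$-sum means it is dominated by its top term, producing $\ll g^{2} q^{(2g-1)/2}$. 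Multiplying through, one finds
\begin{equation*}
\frac{\sqrt{|P|}}{2g+1}\cdot g^{2}q^{(2g-1)/2}\ll \frac{g^{2}\,|P|}{q\,(2g+1)}\ll |P|\,g,
\end{equation*}
which is the desired bound.

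I do not expect any genuine obstacle: the only point requiring mild care is confirming that truncating the outer range from $2g$ to $2g-1$ costs at most a bounded factor in the final estimate, which is immediate since the $m$-sum is geometric-dominated at the top. Everything else transplants verbatim from the proof of Proposition \ref{prop4.7}.
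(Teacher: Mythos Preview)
Your proposal is correct and follows precisely the approach the paper intends: the paper itself offers no separate proof of Proposition~\ref{prop4.8}, merely stating ``Similarly we have'' after the proof of Proposition~\ref{prop4.7}, and your argument reproduces that proof line by line with the upper limit $2g$ replaced by $2g-1$. The arithmetic you carry out is consistent with the paper's treatment of Proposition~\ref{prop4.7}.
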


\subsection{Proof of Theorem for the Second Moment}
We are now in a position to prove Theorem \ref{thm2}.

\begin{proof}[Proof of Theorem \ref{thm2}]
We can write
\begin{multline}
\sum_{\substack{P \ \mathrm{monic} \\ \mathrm{irreducible} \\ \mathrm{deg}(P)=2g+1}}L(\tfrac{1}{2},\chi_{P})^{2}\nonumber\\
=\sum_{\substack{P \ \mathrm{monic} \\ \mathrm{irreducible} \\ \mathrm{deg}(P)=2g+1}}\left(\sum_{\substack{f_{1} \ \mathrm{monic} \\ \mathrm{deg}(f_{1})\leq2g}}\frac{\chi_{P}(f_{1})d(f_{1})}{|f_{1}|^{\tfrac{1}{2}}}+\sum_{\substack{f_{2} \ \mathrm{monic} \\ \mathrm{deg}(f_{2})\leq2g-1}}\frac{\chi_{P}(f_{2})d(f_{2})}{|f_{2}|^{\tfrac{1}{2}}}\right)
\end{multline} 
Making use of Propositions \ref{prop4.5}, \ref{prop4.6}, \ref{prop4.7} and \ref{prop4.8} we establish that
\begin{multline}
\sum_{\substack{P \ \mathrm{monic} \\ \mathrm{irreducible} \\ \mathrm{deg}(P)=2g+1}}L(\tfrac{1}{2},\chi_{P})^{2}=\frac{1}{12}\frac{1}{\zeta_{A}(2)}\frac{|P|}{\log_{q}|P|}\nonumber\\
\times\Bigg[g(g+1)(2g+1)+\left[\frac{2g-1}{2}\right]\left(1+\left[\frac{2g-1}{2}\right]\right)\left(1+2\left[\frac{2g-1}{2}\right]\right)\Bigg]\nonumber\\
+O\left(|P|g\right).\ \ \ \ \ \ \ \ \ \ \ \ \ \ \ \ \ \ \ \ \ \ \ \ \ \ \ \ \ \ \ \ \ \ \ \ \ \ \ \ \ \ \ \ \ \ \ \ \ \ \ \ \ \ \ \ \ \ \ \ \ \ \ \ \ \ \ \ \ \ \ \ \ \ \ 
\end{multline}
We use that
\begin{equation}
\left[\frac{2g-1}{2}\right]\left(1+\left[\frac{2g-1}{2}\right]\right)\left(1+2\left[\frac{2g-1}{2}\right]\right)=(g-1)g(2g-1)
\end{equation}
and
\begin{equation}
g(g+1)(2g+1)+(g-1)g(2g-1)=4g^{3}+O(g)
\end{equation}
and after some simple arithmetical manipulations this gives the desired formula.
\end{proof}

\section{Acknowledgments}
We would like to thank Professor Ze\'{e}v Rudnick for suggesting the problems tackled in this paper and the Professors Michael Rosen and Jeffrey Hoffstein for helpful and interesting discussions.

%The authors also wish to thank an anonymous referee for his comments, which were so helpful in improve the original manuscript.


\begin{thebibliography}{99}
\bibitem[1]{And} J.C. Andrade, {\it Random Matrix Theory and $L$--functions in Function Fields}, Ph.D. Thesis, University of Bristol, Bristol, 2012.
\bibitem[2]{And1}J.C. Andrade, {\it Higher Moments for the Prime Hyperelliptic Ensemble}, in progress, 2013.
\bibitem[3]{AK} J.C. Andrade and J.P. Keating, {\it The mean value of $L(\tfrac{1}{2},\chi)$ in the hyperelliptic ensemble}, J. of Number Theory, \textbf{132} (2012), 2793--2816.
\bibitem[4]{A}
E. Artin, \emph{Quadratische K\"{o}rper in Geibiet der H\"{o}heren Kongruzzen I and II.} 
Math. Z.  \textbf{19} (1924), 153--296.
%\bibitem[2]{BucurIMRN} A. Bucur, C. David, B. Feigon and M. Lalin, {\it Statistics for traces of cyclic trigonal curves over finite fields}, Int. Math. Res. Not. IMRN no.5 (2010), 932--967. 
%\bibitem[3]{BucurMMJ} A. Bucur and A. Diaconu, {\it Moments of quadratic Dirichlet $L$--functions over rational function fields}, Moscow Mathematical Journal \textbf{10} (2010), no. 3, 485--517.
\bibitem[5]{CFKRS} J.B. Conrey, D.W. Farmer, J.P. Keating, M.O. Rubinstein and N.C. Snaith, {\it Integral moments of $L$--functions}, Proc. London Math. Soc. \textbf{91}(2005), 33--104.
%\bibitem[5]{CFKRS2} J.B. Conrey, D.W. Farmer, J.P. Keating, M.O. Rubinstein and N.C. Snaith, {\it Lower order terms in the full moment conjecture for the Riemann zeta function}, J. of Number Theory, \textbf{128}(2008), 1516--1554.
\bibitem[6]{DGH} A. Diaconu, D. Goldfeld and J. Hoffstein, {\it Multiple Dirichlet Series and moments of zeta and $L$--functions}, Compos. Math. \textbf{139} (2003), 297--360.
%\bibitem[6]{FR} D. Faifman and Z. Rudnick, {\it Statistics of the zeros of zeta functions in families of hyperelliptic curves over a finite field}, Compositio Mathematica \textbf{146}(2010), 81--101.
\bibitem[7]{GV} D. Goldfeld and C. Viola, {\it Mean Values of $L$--Functions Associated to Elliptic, Fermat and Other Curves at the Centre of the Critical Strip}, J. Number Theory \textbf{11} (1979), 305--320. 
%\bibitem[7]{HL} G. H. Hardy and J. E. Littlewood, {\it Contributions to the theory of the Riemann zeta-function and the theory of the distribution of primes}, Acta Mathematica \textbf{41} (1918), 119--196.
%\bibitem[8]{HR} J. Hoffstein and M. Rosen, {\it Average values of $L$--series in function fields}, J. Reine Angew. Math.  \textbf{426}(1992), 117--150.
%\bibitem[9]{I} A. E. Ingham, {\it Mean-value theorems in the theory of the Riemann zeta-function}, Proc. London Math. Soc. \textbf{27} (1926), 273--300.
\bibitem[8]{J} M. Jutila, {\it On the mean value of $L(\tfrac{1}{2},\chi)$ for real characters}, Analysis 1 (1981), 149--161.
%\bibitem[11]{KS1} N.M. Katz and P. Sarnak, {\it Random Matrices, Frobenius eigenvalues, and monodromy}, American Mathematical Society Colloquium Publications, vol. 45, American Mathematical Society, Providence, RI, 1999.
%\bibitem[12]{KS2} N.M. Katz and P. Sarnak, {\it Zeroes of zeta functions and symmetry}, Bull. Amer. Math. Soc. (N.S.) \textbf{36} (1999), 1--26.
%\bibitem[13]{KeS1} J. P. Keating and N.C. Snaith, {\it Random matrix theory and $\zeta(\tfrac{1}{2}+it)$}, Comm. Math. Phys. \textbf{214} (2000), 57--89.
\bibitem[9]{KeS2} J. P. Keating and N.C. Snaith, {\it Random matrix theory and $L$-functions at $s=\tfrac{1}{2}$}, Comm. Math. Phys. \textbf{214} (2000), 91--110.
%\bibitem[K]{K} E. Kowalski, {\it Some aspects and applications of the Riemann Hypothesis over finite fields}, Milan Journal of Mathematics \textbf{78} no1, (2010), 179--220.
%\bibitem[15]{KR} P. Kurlberg and Z. Rudnick, {\it The fluctuations in the number of points on a hyperelliptic curve over a finite field}, J. of Number Theory, \textbf{129}(2009), 580--587.
\bibitem[10]{Ro} M. Rosen, {\it Number Theory in Function Fields}, Graduate Texts in Mathematics, vol. 210, Springer--Verlag, New York, 2002.
\bibitem[11]{Ru} Z. Rudnick, {\it Traces of high powers of the Frobenius class in the hyperelliptic ensemble}, Acta Arith., \textbf{143}(2010), 81--99.
\bibitem[12]{S} K. Soundararajan, {\it Nonvanishing of quadratic Dirichlet L-functions at $s=\frac{1}{2}$}, Annals of Math. \textbf{152} (2000), 447--488.
\bibitem[13]{S-Y} K. Soundararajan and M.P. Young, {\it The second moment of quadratic twists of modular $L$-functions}, J. Eur. Math. Soc. \textbf{12} (2010), no. 5, 1097--1116.
%\bibitem[19]{T} E. C. Titchmarsh, \emph{The Theory of the Riemann Zeta-function}.
%Second edition. Edited and with a preface by D. R. Heath-Brown. The Clarendon Press, Oxford University Press, New York, 1986.
\bibitem[14]{W} A. Weil, {\it Sur les Courbes Alg\'{e}briques et les Vari\'{e}t\'{e}s qui s'en D\'{e}duisent} (Hermann, Paris, 1948).
\end{thebibliography}
\end{document}